\numberwithin{equation}{section}
\newcommand{\C}{\mathbb{C}}
\newcommand{\R}{\mathbb{R}}
\newcommand{\M}{\mathbb{M}}
\newcommand{\CE}{\mathcal{E}}
\newcommand{\la}{\lambda}
\newcommand{\om}{\omega}
\newtheorem{lem}{Lemma}[section]
\newtheorem{thm}[lem]{Theorem}
\newtheorem{defn}[lem]{Definition}
\newtheorem{clm}[lem]{Claim}
\newtheorem{remark}{Remark}
\begin{document}

\title[Symmetric Hyperbolic Systems with Non-symmetric Relaxation]
{Decay structure for symmetric hyperbolic systems \\
with non-symmetric relaxation and its application
}

\author[Y. Ueda]{Yoshihiro Ueda}
\address{(YU)
Graduate School of Maritime Sciences, Kobe University, Kobe 658-0022, Japan}
\email{ueda@maritime.kobe-u.ac.jp}

\author[R.-J. Duan]{Renjun Duan}
\address{(RJD)
Department of Mathematics, The Chinese University of Hong Kong, Shatin, Hong Kong}
\email{rjduan@math.cuhk.edu.hk}

\author[S. Kawashima]{Shuichi Kawashima}
\address{(SK)
Faculty of Mathematics, Kyushu University, Fukuoka 819-0395, Japan}
\email{kawashim@math.kyushu-u.ac.jp}


\maketitle

\def\cal#1{{\fam2#1}}


%
\begin{abstract}
This paper is concerned with the decay structure for linear symmetric hyperbolic
systems with relaxation.
When the relaxation matrix is symmetric, the dissipative structure of the
systems is completely characterized by the Kawashima-Shizuta stability
condition formulated in \cite{UKS84,SK85}, and we obtain the asymptotic stability
result together with the explicit time-decay rate under that stability condition.
However, some physical models which
satisfy the stability condition have non-symmetric relaxation term
(cf.~the Timoshenko system and the Euler-Maxwell system).
Moreover, it had been already known that
the dissipative structure of such systems is weaker than the standard type
and is of the regularity-loss type
(cf.~\cite{D,IHK08,IK08,USK,UK}).
Therefore our purpose of this paper is to formulate a new structural condition
which include the Kawashima-Shizuta condition, 
and to analyze the weak dissipative structure for general systems with
non-symmetric relaxation.
\end{abstract}
\medskip

Keywords: Decay structure, Regularity-loss, Symmetric hyperbolic system, \\
\qquad\qquad\qquad
Energy method

\medskip

MSC 2010: 35B35, 
35B40, 
35L40 

\tableofcontents
\thispagestyle{empty}

\section{Introduction}

Consider the Cauchy problem for the first-order linear symmetric hyperbolic
system of equations with relaxation:
\begin{gather}
A^0 u_t + \sum_{j=1}^n A^j u_{x_j} + Lu = 0 \label{sys1}
\end{gather}
%
%
%
with
\begin{equation}\label{ID}
    u|_{t=0}=u_0.
\end{equation}
Here
$u=u(t,x) \in \R^m$ over  $t >0$, $x \in \mathbb{R}^n$ is an unknown function,
$u_0=u_0(x)\in \R^m$ over $x\in \R^n$ is a given function,
and $A^j$ $(j=0,1,\cdots,n)$ and $L$ are $m\times m$ real constant matrices,
where integers $m\geq 1,\,n\geq 1$ denote dimensions.
Throughout this paper, it is assumed that all $A^j$ $(j=0,1,\cdots,n)$ are symmetric,
$A^0$ is positive definite and $L$ is nonnegative definite with a nontrivial kernel.
Notice that $L$ is not necessarily symmetric.
For this general linear degenerately dissipative system it is interesting to study
its decay structure under additional conditions on the coefficient matrices
and further investigate the corresponding time-decay property of solutions to
the Cauchy problem.

When the degenerate relaxation matrix $L$ is symmetric,
Umeda-Kawashima-Shizuta \cite{UKS84} proved the large-time asymptotic stability
of solutions for a class of equations of hyperbolic-parabolic type with applications
to both electro-magneto-fluid dynamics and magnetohydrodynamics.
The key idea in \cite{UKS84} and the later generalized work \cite{SK85}
that first introduced the so-called Kawashima-Shizuta condition is
to design the compensating matrix to capture the dissipation
of systems over the degenerate kernel space of $L$. The typical
feature of the time-decay property of solutions established in those work
is that the high frequency part decays exponentially while the low frequency part
decays polynomially with the rate of the heat kernel.

For clearness and for later use let us precisely recall the results in
\cite{UKS84,SK85} mentioned above.
Taking the Fourier transform of \eqref{sys1} with respect to $x$ yields
\begin{gather}
A^0 \hat u_t + i |\xi| A(\omega) \hat u + L \hat u = 0. \label{Fsys1}
\end{gather}
Here and hereafter, $\xi\in{\mathbb R}^n$ denotes the Fourier variable,
$\omega=\xi/|\xi|\in S^{n-1}$ is the unit vector whenever $\xi\neq 0$, and
we define $A(\omega) := \sum_{j=1}^n A^j\omega_j$
with $\omega = (\omega_1, \cdots, \omega_n) \in S^{n-1}$.
The following two conditions for the coefficient matrices are needed:

\medskip

\noindent
{\bf Condition (A)$_0$:}\ \
$A^0$ is real symmetric and positive definite, $A^j$ $(j=1,\cdots,n)$ are
real symmetric, and $L$ is real symmetric and nonnegative definite with
the nontrivial kernel.

\medskip

\noindent
Namely, we assume that
%
\begin{equation*}
\begin{split}
&(A^j)^T = A^j \quad \text{for} \quad j= 0,1, \cdots , n,
\qquad L^T =L, \\[1mm]
&A^0 > 0, \quad L \ge 0 \quad \text{on} \quad \C^m,
\qquad {\rm Ker}(L) \neq \{0\}.
\end{split}
\end{equation*}
Here and in the sequel, the superscript $T$ stands for the transpose of matrices,
and given a matrix $X$, $X\geq 0$ means that
${\rm Re}\, \langle  X z, z \rangle \geq 0$ for any
$z\in{\mathbb C}^m$, while $X>0$ means that
${\rm Re}\, \langle  X z, z \rangle > 0$ for any
$z\in{\mathbb C}^m$ with $z\neq 0$, where $\langle \cdot, \cdot \rangle$
denotes the standard complex inner product in ${\mathbb C}^m$.
Also, for simplicity of notations, given a real matrix $X$, we use
$X_1$ and $X_2$ to denote the symmetric and skew-symmetric parts of $X$,
respectively, namely, $X_1=(X+X^T)/2$ and $X_2=(X-X^T)/2$.

\medskip

\noindent
{\bf Condition (K):}\ \
There is a real compensating matrix
$K(\omega) \in C^{\infty}(S^{n-1})$ with the following properties:
$K(-\omega) = - K(\omega)$, $(K(\omega)A^0)^T = - K(\omega)A^0$ and
\begin{equation}\label{assump-4*}
(K(\omega)A(\omega))_1 >0 \quad \text{on} \quad {\rm Ker}(L)
\end{equation}
for each $\omega\in S^{n-1}$.

\begin{remark}\label{remK1}
Under the condition {\rm (A)$_0$}, the positivity \eqref{assump-4*} in
the condition {\rm (K)} holds if and only if
\begin{equation}\label{assump-4**}
\alpha (K(\omega)A(\omega))_1 + L >0 \quad \text{on} \quad \C^m
\end{equation}
for each $\omega\in S^{n-1}$, where $\alpha$ is a suitably small positive
constant.
\end{remark}

This remark is easily verified as follows. First, we assume
\eqref{assump-4**} and suppose that $\phi\in {\rm Ker}(L)$.
Then, noting that $L\phi=0$, we have
\begin{equation*}
\alpha \langle (K(\omega)A(\omega))_1 \phi, \phi \rangle
= \langle (\alpha(K(\omega)A(\omega))_1 + L) \phi, \phi \rangle
\ge c|\phi|^2
\end{equation*}
for some positive constant $c$, where $\alpha$ is the positive constant in
\eqref{assump-4**}. This shows that \eqref{assump-4**} implies \eqref{assump-4*}.

Next, assuming \eqref{assump-4*}, we show \eqref{assump-4**}.
Let $\phi\in{\mathbb C}^m$ and let $P$ denote the orthogonal projection onto
${\rm Ker}(L)$. We have the decomposition $\phi=P\phi+(I-P)\phi$.
Then the positivity \eqref{assump-4*} on ${\rm Ker}(L)$ yields
$\langle(K(\omega)A(\omega))_1\phi,\phi\rangle
\geq c|P\phi|^2-C|(I-P)\phi|^2$, where $c$ and $C$ are some positive
constants.
Also, we have $\langle L\phi,\phi\rangle\geq c|(I-P)\phi|^2$ for a positive
constant $c$.
Now, letting $\alpha>0$, we can compute as
\begin{equation*}
\langle (\alpha(K(\omega)A(\omega))_1 + L) \phi, \phi \rangle
\geq \alpha c|P\phi|^2+(c-\alpha C)|(I-P)\phi|^2
\geq c_1|\phi|^2,
\end{equation*}
where we choose $\alpha>0$ so small that $\alpha C\leq c/2$, and $c_1$
is a positive constant satisfying $c_1\leq{\rm min}\{\alpha c, c/2\}$.
Thus we have shown that \eqref{assump-4*} implies \eqref{assump-4**}.
This completes the proof of Remark \ref{remK1}.

%
%
%


\medskip

Under the conditions (A)$_0$ and (K) one has:

\begin{thm}
[Decay property of the standard type (\cite{UKS84,SK85})]\label{thm}
%
Assume that both the conditions {\rm (A)$_0$} and {\rm (K)} hold.
Then the Fourier image $\hat u$ of the solution $u$ to the Cauchy problem
\eqref{sys1}-\eqref{ID} satisfies the pointwise estimate:
\begin{equation}\label{std-point}
|\hat u(t,\xi)| \le Ce^{-c \rho(\xi)t}|\hat u_0(\xi)|,
\end{equation}
where $\rho(\xi) := |\xi|^2/(1+|\xi|^2)$.
Furthermore, let $s \ge 0$ be an integer and suppose that the initial data
$u_0$ belong to $H^s \cap L^1$.
Then the solution $u$ satisfies the decay estimate: 
\begin{equation}\label{std-decay}
\|\partial_x^{k} u(t)\|_{L^2} \le C(1+t)^{-n/4-k/2}\|u_0\|_{L^1}
	+ Ce^{-ct}\|\partial_x^{k} u_0\|_{L^2}
\end{equation}
for $k\le s$. Here $C$ and $c$ are positive constants.
\end{thm}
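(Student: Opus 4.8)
The plan is to work entirely on the Fourier side and to construct a Lyapunov-type energy functional in $\xi$ whose dissipation controls the full solution. First I would record the basic energy identity: taking the complex inner product of \eqref{Fsys1} with $\hat u$ and using $(A^j)^T = A^j$, $A^0>0$, one gets
\begin{equation*}
\tfrac{1}{2}\dt \langle A^0 \hat u, \hat u\rangle + \langle L_1 \hat u, \hat u\rangle = 0,
\end{equation*}
so that $\langle A^0\hat u,\hat u\rangle$ is nonincreasing and $\langle L_1\hat u,\hat u\rangle$ is integrable in $t$. This gives dissipation only for the $(I-P)$ component (the part outside $\mathrm{Ker}(L)$), since $L\geq 0$ with nontrivial kernel only forces $\langle L_1\hat u,\hat u\rangle \gtrsim |(I-P)\hat u|^2$.

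The heart of the argument is to recover dissipation in the kernel direction using the compensating matrix $K(\omega)$ from Condition (K). I would multiply \eqref{Fsys1} by $-i|\xi| K(\omega)$ and take the real part of the inner product with $\hat u$; the skew-symmetry condition $(K(\omega)A^0)^T = -K(\omega)A^0$ ensures that the term coming from $A^0\hat u_t$ is (up to the $t$-derivative of $\langle i K(\omega)A^0 \hat u,\hat u\rangle$) free of troublesome contributions, and the main term produces $|\xi|^2 \langle (K(\omega)A(\omega))_1 \hat u,\hat u\rangle$, which by Remark \ref{remK1} — specifically \eqref{assump-4**} — combines with a small multiple of $\langle L_1\hat u,\hat u\rangle$ to dominate $|\xi|^2|\hat u|^2$. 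The remaining term, involving $|\xi| K(\omega)L\hat u$, is lower order and is absorbed using the bound on $|(I-P)\hat u|$ together with Young's inequality, at the cost of a factor that forces the weight $|\xi|^2/(1+|\xi|^2)$ rather than $|\xi|^2$.

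Combining, I would define
\begin{equation*}
E(t,\xi) := \langle A^0 \hat u, \hat u\rangle + \kappa \frac{|\xi|}{1+|\xi|^2}\, \mathrm{Re}\,\langle i K(\omega)A^0 \hat u, \hat u\rangle
\end{equation*}
for a small constant $\kappa>0$; since $K$ is smooth and bounded on $S^{n-1}$, the correction term is a small perturbation, so $E(t,\xi)\sim |\hat u(t,\xi)|^2$ uniformly in $\xi$. Differentiating and using the two identities above, one obtains $\dt E(t,\xi) + c\,\rho(\xi)\, E(t,\xi) \leq 0$ with $\rho(\xi) = |\xi|^2/(1+|\xi|^2)$, and Gronwall's inequality yields the pointwise estimate \eqref{std-point}. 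For the decay estimate \eqref{std-decay}, I would apply Plancherel's theorem, split the frequency integral into $|\xi|\leq 1$ and $|\xi|\geq 1$: on the low-frequency region $\rho(\xi)\sim|\xi|^2$ gives the heat-kernel rate $(1+t)^{-n/4-k/2}$ after bounding $|\hat u_0(\xi)|\leq \|u_0\|_{L^1}$, and on the high-frequency region $\rho(\xi)\geq 1/2$ gives the exponential factor $e^{-ct}$ times $\|\partial_x^k u_0\|_{L^2}$. The main obstacle is the construction and bookkeeping in the second step — verifying that the $K(\omega)L\hat u$ cross term can be genuinely absorbed and that this is exactly what degrades the weight from $|\xi|^2$ to $\rho(\xi)$ — since everything else is routine energy estimation and Fourier analysis.
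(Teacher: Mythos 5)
Your proposal is correct and follows essentially the same Fourier-space energy method as the paper: the paper proves this theorem as the special case $S=0$ of Theorem \ref{thm2}, whose Section~3 proof is exactly your basic energy identity plus the $-i|\xi|K(\omega)$ multiplier with the weight $|\xi|/(1+|\xi|^2)$, and the frequency-splitting derivation of \eqref{std-decay} from \eqref{std-point} is identical to the paper's argument at the end of Section~2. The only bookkeeping point is the sign of the correction: with $E_2=\langle iK(\omega)A^0\hat u,\hat u\rangle$ as in \eqref{eqK}, the Lyapunov functional must carry this term with the sign that makes $+|\xi|^2\langle (K(\omega)A(\omega))_1\hat u,\hat u\rangle$ appear as dissipation, which is a trivial fix to your displayed $E(t,\xi)$.
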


Unfortunately, when the degenerate relaxation matrix $L$ is not symmetric,
Theorem \ref{thm} can not be applied any longer.
In fact, this is the case for some concrete systems, for example,
the Timoshenko system \cite{IHK08,IK08} and the Euler-Maxwell system
\cite{D,USK,UK}, where the linearized relaxation matrix $L$ indeed has
a nonzero skew-symmetric part while
it was still proved that solutions decay in time in some different way that
we shall point out later on.
Therefore, our purpose of this paper is to formulate some new structural
conditions in order to extend Theorem \ref{thm} to the general system
\eqref{sys1} when $L$ is not symmetric, which can include both
the Timoshenko system and the Euler-Maxwell system.

More precisely, we introduce a constant matrix $S$ which satisfies some
properties in Condition (S) in Section 2.
When the relaxation matrix $L$ is not symmetric, we have a partial positivity
on ${\rm Ker}(L_1)^\perp$ only. In this situation, we try finding a real
compensating matrix $S$ to make a positivity on ${\rm Ker}(L)^\perp$.
Then, employing further the condition (K), we can construct a full positivity
on $\C^m$.
As the consequence, we can show the following weaker estimates:
\begin{equation}\label{1.8}
|\hat u(t,\xi)| \le Ce^{-c \eta(\xi)t}|\hat u_0(\xi)|,
\end{equation}
where $\eta(\xi) := |\xi|^2/(1+|\xi|^2)^2$, and
\begin{equation}\label{1.9}
\|\partial_x^{k} u(t)\|_{L^2} \le C(1+t)^{-n/4-k/2}\|u_0\|_{L^1}
	+ C(1+t)^{-\ell/2}\|\partial_x^{k+\ell} u_0\|_{L^2}
\end{equation}
for $k+\ell \le s$. See Theorem \ref{thm1} for the details.
We note that these estimates \eqref{1.8} and \eqref{1.9} are weaker than
\eqref{std-point} and \eqref{std-decay}, respectively.
In particular, the decay estimate \eqref{1.8} is of the regularity-loss type.

Similar decay properties of the regularity-loss type have been recently
observed for several interesting systems. We refer the reader to
\cite{IHK08,IK08,LK4} (cf. \cite{ABMR,MRR}) for the dissipative Timoshenko system,
\cite{D,USK,UK} for the Euler-Maxwell system,
\cite{HK,KK} for a hyperbolic-elliptic system in radiation gas dynamics,
\cite{LK1,LK2,LK3,LC,SK} for a dissipative plate equation, and
\cite{Du-1VMB,DS-VMB} for various kinetic-fluid models.

\smallskip

The contents of this paper are as follows.
In Section 2 we formulate several structural conditions and state our
main results on the decay property of the system \eqref{sys1} when the
relaxation matrix $L$ is not symmetric.
The obtained decay estimates are of the regularity-loss type.
In Section 3 we develop the energy method in the Fourier space and derive
the pointwise estimates for the Fourier image of the solution, which is
crucial in showing our decay estimates.
In Section 4 we discuss the relationship between the structural conditions.
In particular, we show that the rank condition (R) in \cite{BZ11} is a
sufficient condition for the condition (K) even if $L$ is not symmetric.
The decay property of the system \eqref{sys1} with constraints is
investigated in Section 5.
Finally, in Sections 6 and 7, we treat the Timoshenko system and the
Euler-Maxwell system as applications of our general theory.

\smallskip

\noindent
{\bf Notations.}\ \
For a nonnegative integer $k$, we denote by $\partial^k_x$
the totality of all the $k$-th order derivatives with respect
to $x = (x_1, \cdots ,x_n)$.

Let $1\leq p\leq\infty$. Then $L^p=L^p({\mathbb R}^n)$ denotes
the usual Lebesgue space over ${\mathbb R}^n$ with the norm
$\|\cdot\|_{L^p}$. For a nonnegative integer $s$,
$H^{s}=H^{s}({\mathbb R}^n)$ denotes
the $s$-th order Sobolev space over ${\mathbb R}^n$ in the $L^2$
sense, equipped with the norm $\|\cdot\|_{H^{s}}$.
We note that 
$L^2=H^0$.

Finally, in this paper, we use $C$ or $c$ to denote various positive
constants without confusion.


\section{Decay structure}

In this section we shall introduce new structural conditions to investigate
the decay structure and time-decay property for the system \eqref{sys1}
when $L$ is not necessarily symmetric,
and then state under those conditions the main results which are the
generalization of Theorem \ref{thm}.
Our structural conditions are formulated as follows.

\medskip

\noindent
{\bf Condition (A):}\ \
$A^0$ is real symmetric and positive definite, $A^j$ $(j=1,\cdots,n)$ are
real symmetric, while $L$ is not necessarily real symmetric but is nonnegative
definite with the nontrivial kernel.

\medskip

\noindent
Namely, it is assumed that
\begin{equation*}
\begin{split}
&(A^j)^T = A^j \quad \text{for} \quad j= 0,1, \cdots , n,  \\[1mm]
&A^0 > 0, \quad L \ge 0 \quad \text{on} \quad \C^m,
\qquad {\rm Ker}(L) \neq \{0\}.
\end{split}
\end{equation*}
%




\medskip

\noindent
{\bf Condition (S):}\ \
There is a real constant matrix $S$ with the following properties:
$(SA^0)^T = SA^0$ and
%
\begin{gather}
(SL)_1+L_1\geq 0 \quad {\rm on} \quad {\mathbb C}^m, \qquad
{\rm Ker}((SL)_1+L_1) = 
{\rm Ker}(L). \label{assump-2}
\end{gather}

\begin{remark}\label{remK2}
Under the conditions {\rm (A)} and {\rm (S)}, the positivity \eqref{assump-4*}
in the condition {\rm (K)} holds if and only if
\begin{equation}\label{assump-4}
\alpha (K(\omega)A(\omega))_1+(SL)_1+L_1>0 \quad \text{on} \quad \C^m
\end{equation}
for each $\omega\in S^{n-1}$, where $\alpha$ is a suitably small positive
constant.
\end{remark}

In fact, by virtue of \eqref{assump-2}, we find that
\begin{equation*}
\langle ((SL)_1 + L_1) \phi, \phi \rangle
\ge c |(I-P) \phi|^2
\end{equation*}
for any $\phi \in \C^m$, where $c$ is a positive constant and $P$ denotes
the orthogonal projection onto ${\rm Ker}(L)$.
Using this property, we can show the equivalence of \eqref{assump-4*} and
\eqref{assump-4} in a similar way as in the proof of Remark \ref{remK1}.

When we use the condition (S), we additionally assume either the condition
(S)$_1$ or (S)$_2$ below.

\medskip

\noindent
{\bf Condition (S)$_1$:}\ \
For each $\omega\in S^{n-1}$, the matrix $S$ in the condition (S) satisfies
%
\begin{gather}
i(SA(\omega))_2\geq 0 \quad \text{on} \quad {\rm Ker}(L_1). \label{assump-3}
\end{gather}

%

\medskip

\noindent
{\bf Condition (S)$_2$:}\ \
For each $\omega\in S^{n-1}$, the matrix $S$ in the condition (S) satisfies
\begin{equation}\label{assump-5}
i(SA(\omega))_2 \ge 0 \quad \text{on} \quad \C^m.
\end{equation}

\medskip

Under the above structural conditions, we can state our main results on
the decay property for the system \eqref{sys1}. The first one uses the
condition (S)$_1$.

\begin{thm}
[Decay property of the regularity-loss type]\label{thm1}
Assume that the conditions {\rm (A)}, {\rm (S)}, {\rm (S)$_1$} and
{\rm (K)} hold.
Then the Fourier image $\hat u$ of the solution $u$ to the Cauchy problem
\eqref{sys1}-\eqref{ID} satisfies the pointwise estimate:
\begin{equation}\label{loss-point}
|\hat u(t,\xi)| \le Ce^{-c \eta(\xi)t}|\hat u_0(\xi)|,
\end{equation}
where $\eta(\xi) := |\xi|^2/(1+|\xi|^2)^2$.
Moreover, let $s \ge 0$ be an integer and suppose that the initial data
$u_0$ belong to $H^s \cap L^1$.
Then the solution $u$ satisfies the decay estimate: 
\begin{equation}\label{loss-decay}
\|\partial_x^{k} u(t)\|_{L^2} \le C(1+t)^{-n/4-k/2}\|u_0\|_{L^1}
	+ C(1+t)^{-\ell/2}\|\partial_x^{k+\ell} u_0\|_{L^2}
\end{equation}
for $k+\ell \le s$. Here $C$ and $c$ are positive constants.
\end{thm}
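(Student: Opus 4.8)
The plan is to prove the pointwise Fourier bound \eqref{loss-point} by an energy method applied to \eqref{Fsys1}, and to deduce \eqref{loss-decay} from it by Plancherel's theorem and a low/high frequency split. (The linear symmetric hyperbolic system \eqref{sys1}-\eqref{ID} is globally well posed in $H^s$, and for each fixed $\xi$ its Fourier image $\hat u(\cdot,\xi)$ solves a linear ordinary differential equation, so all the identities below are exact.) For the decay estimate, write $\|\partial_x^k u(t)\|_{L^2}^2=\int_{\R^n}|\xi|^{2k}|\hat u(t,\xi)|^2\,d\xi$ and insert \eqref{loss-point}. On $|\xi|\le1$ one has $\eta(\xi)\ge c|\xi|^2$, so the corresponding integral, controlled by $\|\hat u_0\|_{L^\infty}^2\le\|u_0\|_{L^1}^2$ times a heat-type integral, yields the rate $(1+t)^{-n/2-k}$; on $|\xi|\ge1$ one has $\eta(\xi)\ge c|\xi|^{-2}$, so $e^{-c\eta(\xi)t}\le e^{-ct/|\xi|^2}\le C_\ell(1+t)^{-\ell}|\xi|^{2\ell}$ and the extra factor $|\xi|^{2\ell}$ is absorbed into $\|\partial_x^{k+\ell}u_0\|_{L^2}^2$, finite because $k+\ell\le s$. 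Adding the two parts and taking square roots gives \eqref{loss-decay}.

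For \eqref{loss-point}, fix $\xi$ and derive three energy identities for $\hat u=\hat u(t,\xi)$. (i) Taking $\mathrm{Re}\langle\cdot,\hat u\rangle$ of \eqref{Fsys1}: since $(A^j)^T=A^j$ the term $i|\xi|\langle A(\omega)\hat u,\hat u\rangle$ is purely imaginary, and $\mathrm{Re}\langle L\hat u,\hat u\rangle=\langle L_1\hat u,\hat u\rangle$, whence $\tfrac12\tfrac{d}{dt}\langle A^0\hat u,\hat u\rangle+\langle L_1\hat u,\hat u\rangle=0$; as $L_1\ge0$, $\langle L_1\hat u,\hat u\rangle\ge c\,|(I-P_1)\hat u|^2$ with $P_1$ the orthogonal projection onto $\mathrm{Ker}(L_1)$. (ii) Multiplying \eqref{Fsys1} by $S$ and taking $\mathrm{Re}\langle\cdot,\hat u\rangle$, and using $(SA^0)^T=SA^0$, gives $\tfrac12\tfrac{d}{dt}\mathrm{Re}\langle SA^0\hat u,\hat u\rangle+|\xi|\langle i(SA(\omega))_2\hat u,\hat u\rangle+\langle(SL)_1\hat u,\hat u\rangle=0$. (iii) Differentiating $F_K:=-\mathrm{Re}\langle iK(\omega)A^0\hat u,\hat u\rangle$, substituting $A^0\hat u_t=-i|\xi|A(\omega)\hat u-L\hat u$ from \eqref{Fsys1}, and using $(K(\omega)A^0)^T=-K(\omega)A^0$, gives $\tfrac{d}{dt}F_K+2|\xi|\langle(K(\omega)A(\omega))_1\hat u,\hat u\rangle=2\,\mathrm{Re}\langle iK(\omega)L\hat u,\hat u\rangle$, whose right-hand side is $O(|(I-P)\hat u|\,|\hat u|)$ because $L\hat u=L(I-P)\hat u$, $P$ being the orthogonal projection onto $\mathrm{Ker}(L)$.

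Next, introduce the Lyapunov functional $\mathcal E(t,\xi):=\langle A^0\hat u,\hat u\rangle+\dfrac{\alpha_1}{1+|\xi|^2}\,\mathrm{Re}\langle SA^0\hat u,\hat u\rangle+\dfrac{\alpha_2|\xi|}{(1+|\xi|^2)^2}\,F_K$ and form the sum of (i), $\tfrac{\alpha_1}{1+|\xi|^2}$ times (ii), and $\tfrac{\alpha_2|\xi|}{(1+|\xi|^2)^2}$ times (iii). Since $A^0>0$ while the other two terms of $\mathcal E$ are uniformly $O(\alpha_1+\alpha_2)|\hat u|^2$, one has $\mathcal E\sim|\hat u|^2$ for $\alpha_1,\alpha_2$ small. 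On the dissipation side: $L_1$ controls $(I-P_1)\hat u$; adding a small multiple of $\langle(SL)_1\hat u,\hat u\rangle$ and invoking \eqref{assump-2} (which gives $\langle((SL)_1+L_1)\hat u,\hat u\rangle\ge c|(I-P)\hat u|^2$) upgrades this to a term $\gtrsim\tfrac{\alpha_1}{1+|\xi|^2}|(I-P)\hat u|^2$; and by \eqref{assump-4*} the $K$-term gives $\gtrsim\tfrac{\alpha_2|\xi|^2}{(1+|\xi|^2)^2}|P\hat u|^2=\alpha_2\,\eta(\xi)|P\hat u|^2$. Condition (S)$_1$ enters precisely here: splitting $\hat u=P_1\hat u+(I-P_1)\hat u$ and using $i(SA(\omega))_2\ge0$ on $\mathrm{Ker}(L_1)$ gives $\langle i(SA(\omega))_2\hat u,\hat u\rangle\ge-C|(I-P_1)\hat u|\,|\hat u|$, and the weight $\tfrac{\alpha_1}{1+|\xi|^2}$ is chosen exactly so that the ensuing cross term, after Young's inequality, is $\le C|(I-P_1)\hat u|^2+C\alpha_1^2\eta(\xi)|\hat u|^2$, so that its dangerous part appears only at the $\eta(\xi)$ scale. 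Using the elementary bounds $\eta(\xi)\le(1+|\xi|^2)^{-1}$ and $\tfrac{|\xi|}{(1+|\xi|^2)^2}\le(1+|\xi|^2)^{-1}$, one absorbs all remaining cross terms and the lower-order right-hand side of (iii) into the three principal dissipative terms, provided $\alpha_1$ is first taken small and $\alpha_2$ is then chosen with $\alpha_1^2\ll\alpha_2\ll\alpha_1$. This leaves $\tfrac{d}{dt}\mathcal E+c\,\eta(\xi)|\hat u|^2\le0$, hence $\tfrac{d}{dt}\mathcal E+c'\eta(\xi)\mathcal E\le0$, and Gronwall's inequality together with $\mathcal E\sim|\hat u|^2$ yields \eqref{loss-point}.

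The main obstacle is this last bookkeeping. The hyperbolic ($K$) dissipation must be made to appear at exactly the regularity-loss scale $\eta(\xi)$ and no stronger, because the lower-order term $\mathrm{Re}\langle iK(\omega)L(I-P)\hat u,\hat u\rangle$ in (iii) couples to the whole of $|\hat u|$, including the degenerate component $P\hat u$ for which there is no other dissipation; this is what forces the weight on $F_K$ to be at most $\tfrac{\alpha_2|\xi|}{(1+|\xi|^2)^2}$. Simultaneously one must control $i(SA(\omega))_2$, which is sign-definite only on the possibly larger space $\mathrm{Ker}(L_1)$, not on $\mathrm{Ker}(L)$: it is the interplay of Condition (S)$_1$, the identity $\mathrm{Ker}((SL)_1+L_1)=\mathrm{Ker}(L)$ in \eqref{assump-2}, and the choice of frequency weights that makes the estimate close. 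Everything else reduces to Cauchy--Schwarz and Young's inequality.
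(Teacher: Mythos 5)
Your proposal is correct and follows essentially the same route as the paper: the same three Fourier-space energy identities (multipliers $I$, $S$, and $K(\omega)$), the same Lyapunov functional with the compensating $K$-part weighted by $|\xi|/(1+|\xi|^2)^2$, the same use of (S), (S)$_1$ and (K) to produce the dissipation $\eta(\xi)|\hat u|^2$, and the same Plancherel plus low/high-frequency split for \eqref{loss-decay}. The only cosmetic differences are that the paper invokes the equivalent full-space positivity \eqref{assump-4} (Remark \ref{remK2}) where you use \eqref{assump-4*} with the projection decomposition, and its smallness parameters are nested rather than ordered as $\alpha_1^2\ll\alpha_2\ll\alpha_1$.
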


\begin{remark}
The decay estimate \eqref{loss-decay} is of the regularity-loss type
because we have the decay rate $(1+t)^{-\ell/2}$ only by assuming the
additional $l$-th order regularity on the initial data.
\end{remark}

Our second main result uses the stronger condition (S)$_2$ instead of
(S)$_1$ and gives the decay estimate of the standard type.

\begin{thm}
[Decay property of the standard type]\label{thm2}
If the condition {\rm (S)$_1$} in Theorem \ref{thm1} is replaced by the
stronger condition {\rm (S)$_2$}, then the pointwise estimate
\eqref{loss-point} and the decay estimate \eqref{loss-decay} in
Theorem \ref{thm1} can be refined as \eqref{std-point} and \eqref{std-decay}
in Theorem \ref{thm}, respectively.
%
%
%
%
\end{thm}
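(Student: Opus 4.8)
The plan is to revisit the energy method in Fourier space that underlies Theorem \ref{thm1} and observe that the only place where the condition (S)$_1$ enters is in estimating the skew-symmetric term $i(SA(\omega))_2$ on the kernel ${\rm Ker}(L_1)$; under (S)$_2$ this term is nonnegative on all of $\C^m$, which removes a loss of two powers of $|\xi|$ in the construction of the Lyapunov functional. Concretely, I would first take the Fourier-transformed system \eqref{Fsys1}, form the basic energy identity by pairing with $\hat u$ using the inner product associated to $A^0$, which gives
\begin{equation*}
\frac{1}{2}\frac{d}{dt}\langle A^0\hat u,\hat u\rangle + \langle L_1\hat u,\hat u\rangle = 0,
\end{equation*}
so $\langle L_1\hat u,\hat u\rangle$ controls $|(I-P)\hat u|^2$ only. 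The point of $S$ is to produce dissipation for $P\hat u$ restricted to ${\rm Ker}(L)^\perp\cap{\rm Ker}(L_1)$, and the point of $K(\omega)$ is to produce dissipation for the part of $\hat u$ in ${\rm Ker}(L)$.

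Next I would recall the two auxiliary identities. Pairing \eqref{Fsys1} with $S\hat u$ (using $(SA^0)^T=SA^0$ to symmetrize the time-derivative term and splitting $SA(\omega)$ and $SL$ into symmetric and skew parts) yields, after taking real parts,
\begin{equation*}
\frac{1}{2}\frac{d}{dt}\langle SA^0\hat u,\hat u\rangle + |\xi|\,{\rm Re}\,\langle i(SA(\omega))_2\hat u,\hat u\rangle + \langle (SL)_1\hat u,\hat u\rangle = -|\xi|\,{\rm Re}\,\langle (SA(\omega))_1\hat u,\hat u\rangle,
\end{equation*}
where the right-hand side and the last term on the left are absorbed using \eqref{assump-2} and Schwarz. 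Similarly, pairing with the operator built from $K(\omega)$ produces the term $|\xi|\langle (K(\omega)A(\omega))_1\hat u,\hat u\rangle$ modulo lower-order and $L$-controlled contributions, exactly as in \cite{SK85}. In the proof of Theorem \ref{thm1} one forms a Lyapunov functional of the schematic form $E = \langle A^0\hat u,\hat u\rangle + \alpha_1\frac{|\xi|}{1+|\xi|^2}\langle SA^0\hat u,\hat u\rangle + \alpha_2\frac{|\xi|}{(1+|\xi|^2)^2}\langle K(\omega)A^0\hat u,\hat u\rangle$, and the extra factor $1/(1+|\xi|^2)$ in the $K$-term (compared with the standard case) is forced precisely because the $S$-step only gives control of $P\hat u$ on ${\rm Ker}(L_1)$ after paying $|\xi|^2$ to dominate the indefinite part of $i(SA(\omega))_2$ off that kernel.

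The main step, then, is to check that under (S)$_2$ the term $|\xi|\,{\rm Re}\,\langle i(SA(\omega))_2\hat u,\hat u\rangle$ is $\geq 0$ on all of $\C^m$, so it simply drops from the right-hand side rather than needing to be controlled; consequently one may take the $S$-multiplier with weight $\alpha_1\frac{|\xi|}{1+|\xi|^2}$ and the $K$-multiplier with weight $\alpha_2\frac{|\xi|}{1+|\xi|^2}$ as well (no extra power of $1+|\xi|^2$), and the resulting functional $E$ satisfies, for suitable small $0<\alpha_2\ll\alpha_1\ll 1$, the differential inequality $\frac{d}{dt}E + c\,\rho(\xi)\,E\leq 0$ with $\rho(\xi)=|\xi|^2/(1+|\xi|^2)$ and $E\sim|\hat u|^2$. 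Integrating gives the pointwise bound \eqref{std-point}, and then the decay estimate \eqref{std-decay} follows from \eqref{std-point} by Plancherel and the standard splitting of the frequency integral into $|\xi|\leq 1$ and $|\xi|\geq 1$, exactly as in the derivation in Section 3. I expect the only real obstacle to be bookkeeping: verifying that every term previously estimated with the weaker weight in the proof of Theorem \ref{thm1} can be re-estimated with the improved weight once the $i(SA(\omega))_2$ term has a favorable sign, and tracking the ordering of the small parameters so that all error terms are absorbed; there is no new structural idea beyond the sign gain from (S)$_2$.
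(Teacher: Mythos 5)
Your overall strategy is exactly the paper's: keep the three Fourier-space identities \eqref{eq}, \eqref{eqS}, \eqref{eqK}, observe that under (S)$_2$ the term $|\xi|\langle i(SA(\omega))_2\hat u,\hat u\rangle$ is nonnegative on all of $\C^m$ and hence becomes a good term, drop one power of $1+|\xi|^2$ from the weights, deduce $\frac{d}{dt}\tilde E+c\rho(\xi)\tilde E\le 0$, and pass to \eqref{std-decay} via Plancherel and the frequency splitting. Two points in your execution, however, need correction. A minor one first: in your $S$-identity the right-hand side $-|\xi|\,{\rm Re}\,\langle(SA(\omega))_1\hat u,\hat u\rangle$ should not be there at all; since $(SA(\omega))_1$ is real symmetric, $\langle(SA(\omega))_1\hat u,\hat u\rangle$ is real, so ${\rm Re}\,\langle i(SA(\omega))_1\hat u,\hat u\rangle=0$ and taking real parts leaves exactly \eqref{eqS} with zero right-hand side. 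Had such a term genuinely been present, it could not have been ``absorbed using \eqref{assump-2} and Schwarz'': it is of size $|\xi|\,|\hat u|^2$ with no small prefactor, which the available dissipation cannot dominate.

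More seriously, the weights you propose do not close at high frequencies. With the $S$-identity weighted by $\alpha_1|\xi|/(1+|\xi|^2)$ and the $K$-identity weighted so that its dissipation carries $\alpha_2\frac{|\xi|^2}{1+|\xi|^2}\langle(K(\omega)A(\omega))_1\hat u,\hat u\rangle$, the indefinite part of $(K(\omega)A(\omega))_1$ off ${\rm Ker}(L)$ contributes a wrong-sign term of size $\alpha_2\frac{|\xi|^2}{1+|\xi|^2}|(I-P)\hat u|^2$, while the only control of $|(I-P)\hat u|^2$ (as opposed to $|(I-P_1)\hat u|^2$, which $L_1$ handles) comes from $(SL)_1+L_1$ carrying your weight $\alpha_1\frac{|\xi|}{1+|\xi|^2}$; absorption would require $C\alpha_2|\xi|\le c\,\alpha_1$, which fails for large $|\xi|$ no matter how the small constants are ordered. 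The repair is precisely the paper's choice: keep the $S$-identity with a frequency-independent weight, i.e.\ multiply \eqref{eq} by $1+|\xi|^2$, \eqref{eqS} by $\alpha_1(1+|\xi|^2)$ and \eqref{eqK} by $\alpha_1\alpha_2\alpha$, which amounts to the functional $\tilde E=E_0+\alpha_1\big(E_1+\frac{\alpha_2\alpha|\xi|}{1+|\xi|^2}E_2\big)$. Then the $(SL)_1+L_1$ control has size $\alpha_1$ uniformly in $\xi$, the $K$-dissipation has size $\alpha_1\alpha_2\alpha\frac{|\xi|^2}{1+|\xi|^2}\le\alpha_1\alpha_2\alpha$, the positivity \eqref{assump-4} gives $\tilde D_1\ge c\,\rho(\xi)|\hat u|^2+\dots$, and the rest of your argument (nonnegativity of $\tilde D_2$ from (S)$_2$, absorption of $\tilde G$ into $\tilde D_1/2$, Gronwall, Plancherel with the splitting $|\xi|\le 1$, $|\xi|\ge 1$) goes through exactly as in the paper.
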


It should be pointed out that Theorem \ref{thm2} is a direct extension of
Theorem \ref{thm} and is applicable to the system \eqref{sys1} with a
non-symmetric relaxation matrix $L$. More specifically, we have:

\begin{clm}
Theorem \ref{thm} holds as a corollary of Theorem \ref{thm2}.
In other words, when $L$ is real symmetric, Theorem \ref{thm2} is
reduced to Theorem \ref{thm}.
\end{clm}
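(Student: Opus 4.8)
The claim to prove is that Theorem~\ref{thm} follows from Theorem~\ref{thm2} as a special case when $L$ is real symmetric. The plan is essentially to exhibit, in the symmetric case, a choice of the matrix $S$ for which Conditions~(A), (S), (S)$_2$ and (K) all reduce to Conditions~(A)$_0$ and (K).

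First I would take $S = 0$. With $S=0$ the requirement $(SA^0)^T = SA^0$ is trivially satisfied, and since $L$ is now symmetric we have $L_2 = 0$, hence $L_1 = L$. Then $(SL)_1 + L_1 = L_1 = L$, so the nonnegativity $(SL)_1 + L_1 \ge 0$ on $\mathbb{C}^m$ is exactly the nonnegativity of $L$ from Condition~(A)$_0$, and the kernel identity ${\rm Ker}((SL)_1 + L_1) = {\rm Ker}(L)$ holds trivially. Thus Condition~(S) is satisfied. Likewise $(SA(\omega))_2 = 0$ with $S=0$, so $i(SA(\omega))_2 = 0 \ge 0$ on $\mathbb{C}^m$, which is Condition~(S)$_2$. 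Condition~(A) is weaker than Condition~(A)$_0$ (it merely drops the symmetry requirement on $L$), so (A)$_0$ implies (A). Finally Condition~(K) is literally assumed in Theorem~\ref{thm}. Hence all the hypotheses of Theorem~\ref{thm2} hold.

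Next I would observe that with this choice the conclusions of Theorem~\ref{thm2} are precisely the conclusions of Theorem~\ref{thm}: by the statement of Theorem~\ref{thm2}, under Conditions~(A), (S), (S)$_2$ and (K) the Fourier image satisfies the standard pointwise estimate \eqref{std-point} with $\rho(\xi) = |\xi|^2/(1+|\xi|^2)$, and the solution satisfies the standard decay estimate \eqref{std-decay}. These are word for word the assertions of Theorem~\ref{thm}. Therefore, given any data satisfying the hypotheses of Theorem~\ref{thm}, one applies Theorem~\ref{thm2} with $S=0$ and reads off exactly \eqref{std-point} and \eqref{std-decay}.

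There is no real obstacle here: the only thing to verify carefully is the bookkeeping that when $L = L^T$ one has $L_1 = L$ and $L_2 = 0$, and that Remark~\ref{remK2} then collapses to Remark~\ref{remK1}, so that Condition~(K) in the present sense coincides with the original Kawashima--Shizuta condition. The one mildly subtle point worth a sentence is that Condition~(K) as stated here is identical in form to the one in the hypotheses of Theorem~\ref{thm}, so no translation of that condition is needed; the content of the claim is entirely that $S=0$ is an admissible compensator in the symmetric case, which the above computations confirm.
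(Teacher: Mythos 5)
Your proposal is correct and follows exactly the paper's own argument: when $L$ is real symmetric one takes $S=0$, notes $L_1=L$ so that conditions (S) and (S)$_2$ hold trivially and (A) reduces to (A)$_0$, and then reads off \eqref{std-point} and \eqref{std-decay} from Theorem \ref{thm2}. Nothing further is needed.
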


In fact, when $L$ is real symmetric, the condition (A) is reduced to
(A)$_0$. Moreover, in this case, we have $L=L_1$ so that the conditions
(S) and (S)$_2$ are satisfied trivially with $S=0$.
This shows that Theorem \ref{thm2} implies Theorem \ref{thm}.

\medskip

Next we introduce the rank condition (R) which was formulated by Beauchard
and Zuazua in \cite{BZ11}.

\medskip

\noindent
{\bf Condition (R):}\ \
The matrices $A^0$, $A(\om)$ and $L$ satisfies the following rank condition:
\begin{equation}\label{thm.ra.c4}
    {\rm Rank}\,\left[
    \begin{array}{c}
      L\\
      L \tilde A(\om) \\
      \vdots\\
      L \tilde A(\om)^{m-1}\\
    \end{array}
    \right]=m
\end{equation}
for each $\omega\in S^{n-1}$, where $\tilde A(\om) := (A^0)^{-1}A(\om)$.

\medskip

\noindent
This condition (R) is called the Kalman rank condition in the control theory
and is proved to be equivalent to the condition (K) under the condition
(A)$_0$ where $L$ is real symmetric. For the details, see \cite{BZ11}.
In our case where $L$ is not necessarily real symmetric, under the condition
(A), we can show that the condition (R) implies the condition (K);
see Theorem \ref{thmK} in Section 4. Consequently, we have the following claim.

\begin{clm}
In Theorems \ref{thm1} and \ref{thm2} above, we can replace the condition
{\rm (K)} by the rank condition {\rm (R)}.
\end{clm}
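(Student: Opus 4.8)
The plan is to reduce the claim to the single implication that the rank condition {\rm (R)} implies the condition {\rm (K)} under the condition {\rm (A)}; this is precisely the content of Theorem \ref{thmK} in Section 4. Granting that implication, the claim is immediate. Indeed, the hypotheses of Theorem \ref{thm1} are {\rm (A)}, {\rm (S)}, {\rm (S)$_1$} and {\rm (K)} (and those of Theorem \ref{thm2} are the same with {\rm (S)$_1$} replaced by {\rm (S)$_2$}), and of these only {\rm (K)} is being traded for {\rm (R)}. So, starting from data that satisfy {\rm (A)}, {\rm (S)}, {\rm (S)$_1$} (resp.\ {\rm (S)$_2$}) and {\rm (R)}, Theorem \ref{thmK} produces a compensating matrix $K(\om)$ verifying {\rm (K)}, and Theorem \ref{thm1} (resp.\ Theorem \ref{thm2}) then applies verbatim and yields the estimates \eqref{loss-point}--\eqref{loss-decay} (resp.\ \eqref{std-point}--\eqref{std-decay}). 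No new estimate has to be established; the substance lies entirely in Theorem \ref{thmK}.

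For Theorem \ref{thmK} I would argue as follows. Fix $\om\in S^{n-1}$ and set $\tilde A=\tilde A(\om)=(A^0)^{-1}A(\om)$. By the Cayley--Hamilton theorem, the rank condition \eqref{thm.ra.c4} is equivalent to $\bigcap_{k\ge 0}{\rm Ker}(L\tilde A^k)=\{0\}$, i.e.\ to the observability condition for the pair $(\tilde A,L)$. One then carries out the Kalman-type construction used by Beauchard and Zuazua \cite{BZ11} in the symmetric case: one builds $K(\om)$ as a suitable polynomial expression in $\tilde A$ with coefficients assembled from $L$, $L^T$ and $A^0$, designed so that (i) $K(\om)A^0$ is skew-symmetric — here one uses that $A^0$ is symmetric positive definite and each $A^j$, hence $A(\om)$, is symmetric — and (ii) there are constants $c,C>0$ with ${\rm Re}\,\langle K(\om)A(\om)\phi,\phi\rangle\ge c\sum_{k=0}^{m-1}|L\tilde A^k\phi|^2-C|L\phi|^2$ for all $\phi\in\C^m$. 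On ${\rm Ker}(L)$ the error term $|L\phi|^2$ vanishes, and the rank condition makes $\phi\mapsto\big(\sum_{k=0}^{m-1}|L\tilde A^k\phi|^2\big)^{1/2}$ a norm on $\C^m$; hence $(K(\om)A(\om))_1>0$ on ${\rm Ker}(L)$, which is \eqref{assump-4*}.

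Two features of the non-symmetric setting need a word. First, since only real parts enter the quadratic forms and $L\ge 0$ gives ${\rm Re}\,\langle L\phi,\phi\rangle=\langle L_1\phi,\phi\rangle$, the error term $C|L\phi|^2$ above can be absorbed using $\langle L_1\phi,\phi\rangle$ — the same device already used in Remarks \ref{remK1} and \ref{remK2} — so that the non-symmetry of $L$ causes no genuine difficulty here. Second, the parity and smoothness requirements $K(-\om)=-K(\om)$ and $K(\cdot)\in C^\infty(S^{n-1})$ come for free: $A(-\om)=-A(\om)$, hence $\tilde A(-\om)=-\tilde A(\om)$, and the construction employs an odd total power of $A(\om)$, so $K(\om)$ is an odd matrix polynomial in $\om$ and in particular smooth on $S^{n-1}$. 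The main obstacle is the bookkeeping inside the Kalman construction, namely choosing the polynomial coefficients so that the skew-symmetry constraint $(K(\om)A^0)^T=-K(\om)A^0$ and the positivity \eqref{assump-4*} hold simultaneously; once the explicit form of $K(\om)$ is fixed, the remaining verification is a finite-dimensional linear-algebra computation.
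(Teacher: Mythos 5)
Your proposal is correct and follows the paper's own route: the paper justifies this Claim exactly by the reduction you describe, namely that under condition (A) the rank condition (R) implies (K) (Theorem \ref{thmK}), after which Theorems \ref{thm1} and \ref{thm2} apply verbatim. Your sketch of Theorem \ref{thmK} also mirrors the paper's Section 4 argument (the Beauchard--Zuazua Kalman-type compensating matrix, Cayley--Hamilton, and the bound $\langle (K(\om)A(\om))_1\phi,\phi\rangle\ge c\sum_k|L\tilde A(\om)^k\phi|^2-C|L\phi|^2$ combined with the vanishing of $|L\phi|^2$ on ${\rm Ker}(L)$), so no essential difference remains.
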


In Theorems \ref{thm1} and \ref{thm2},
the decay estimates \eqref{loss-decay} and \eqref{std-decay}
can be derived by using the pointwise estimates \eqref{loss-point} and
\eqref{std-point}, respectively.
Before closing this section, we prove this fact.

\medskip

\noindent
{\bf Proof of the decay estimates in Theorems \ref{thm1} and \ref{thm2}.}\ \
We first prove \eqref{loss-decay} in Theorem \ref{thm1}.
By virtue of the Plancherel theorem and the pointwise estimate
\eqref{loss-point}, we obtain
\begin{equation}\label{pf1}
\|\partial_x^k u(t)\|^2_{L^2}
= \int_{\mathbb{R}^n}|\xi|^{2k}|\hat u(t,\xi)|^2 d\xi
\le C \int_{\mathbb{R}^n}|\xi|^{2k}
   e^{-c \eta(\xi)t}|\hat u_0(\xi)|^2 d\xi.
\end{equation}
We divide the integral on the right-hand side of \eqref{pf1}
into two parts $I_1$ and $I_2$
according to the low frequency region $|\xi| \le 1$ and
the high frequency region $|\xi| \ge 1$, respectively.
Since $\eta(\xi) \ge c|\xi|^2$ for $|\xi| \le 1$, we see that
\begin{equation*}
I_1
\le C \sup_{|\xi|\le1} |\hat u_0(\xi)|^2
   \int_{|\xi|\le 1}|\xi|^{2k}e^{-c |\xi|^2t} d\xi
\le C(1+t)^{-n/2-k}\|u_0\|^2_{L^1}.
\end{equation*}
On the other hand, we have $\eta(\xi) \ge c|\xi|^{-2}$ in the
region $|\xi| \ge 1$. Consequently, we obtain
\begin{equation*}
I_2
\le C \sup_{|\xi| \ge 1} \frac{e^{-c t/|\xi|^2}}{|\xi|^{2\ell}}
   \int_{|\xi|\ge 1}|\xi|^{2(k+\ell)}|\hat u_0(\xi)|^2 d\xi
\le C(1+t)^{-\ell}\|\partial_x^{k + \ell}u_0\|^2_{L^2}.
\end{equation*}
Therefore, substituting these estimates into \eqref{pf1},
we get the desired decay estimate \eqref{loss-decay}.

To prove \eqref{std-decay} in Theorem \ref{thm2}, we make use of
the pointwise estimate \eqref{std-point}.
Since $\rho(\xi) \ge c|\xi|^2$ for $|\xi| \le 1$ and
$\rho(\xi) \ge c$ for $|\xi| \ge 1$, a similar computation
as in the proof of \eqref{loss-decay} yields the decay estimate
\eqref{std-decay}.
Thus we got the desired decay estimates
and this completes the proof.
\qed


\section{Energy method in the Fourier space}

The aim of this section is to prove the pointwise estimates stated in
Theorems \ref{thm1} and \ref{thm2} by employing the energy method in the
Fourier space.

\medskip

\noindent
{\bf Proof of the pointwise estimate in Theorem \ref{thm1}.}\ \ 
We derive the energy estimate for the system \eqref{Fsys1} in the Fourier space.
Taking the inner product of \eqref{Fsys1} with $\hat{u}$, we have
\begin{equation*}
	\langle A^0 \hat u_t, \hat{u} \rangle
	+ i |\xi| \langle A(\omega)\hat u, \hat u \rangle
	+ \langle L \hat u, \hat u \rangle = 0.
\end{equation*}
Taking the real part, we get the basic energy equality
\begin{equation}\label{eq}
\frac{1}{2}\frac{d}{dt} E_0
+ \langle L_1 \hat u, \hat u \rangle = 0,
\end{equation}
where
$
E_0 := \langle A^0 \hat u, \hat u \rangle.
$
Next we create the dissipation terms.
For this purpose, we multiply \eqref{Fsys1} by the matrix $S$ in the
condition (S) and take the inner product with $\hat u$.
This yields
\begin{equation*}
	\langle SA^0 \hat u_t, \hat u \rangle
	+ i |\xi| \langle SA(\omega)\hat u, \hat u \rangle
	+ \langle SL \hat u, \hat u \rangle = 0.
\end{equation*}
Taking the real part of this equality, we get
\begin{equation}\label{eqS}
\frac{1}{2}\frac{d}{dt} E_1
+ |\xi| \langle i(SA(\omega))_2\hat u, \hat u \rangle
+ \langle (S L)_1 \hat u, \hat u \rangle = 0,
\end{equation}
where
$
E_1 := \langle SA^0 \hat u, \hat u \rangle.
$
Moreover, letting $K(\omega)$ be the compensating matrix in the condition
(K), we multiply \eqref{Fsys1} by $- i|\xi|K(\omega)$ and take
the inner product with $\hat u$.
Then we have
\begin{equation*}
-i|\xi|\langle K(\omega)A^0 \hat u_t, \hat u \rangle
+ |\xi|^2 \langle K(\omega)A(\omega)\hat u, \hat u \rangle
- i|\xi|\langle K(\omega)L \hat u, \hat u \rangle = 0.
\end{equation*}
Taking the real part of the above equality, we obtain
\begin{equation}\label{eqK}
- 
\frac{1}{2}|\xi| \frac{d}{dt}E_2
+ |\xi|^2 \langle (K(\omega)A(\omega))_1\hat u, \hat u \rangle
- |\xi|\langle i(K(\omega)L)_2 \hat u, \hat u \rangle = 0,
\end{equation}
where
$
E_2:= \langle i K(\omega)A^0 \hat u, \hat u \rangle.
$

Now we combine the energy equalities \eqref{eq}, \eqref{eqS} and \eqref{eqK}.
First, letting $\alpha$ be the positive number in Remark \ref{remK2},
we multiply \eqref{eqS} and \eqref{eqK} by $1+|\xi|^2$ and
$\alpha_2\alpha$, respectively, and add these two equalities, where
$\alpha_2$ is a positive constant to be determined. This yields
\begin{equation}\label{eqSK}
\begin{split}
&\frac{1}{2} 
   (1+|\xi|^2) \frac{d}{dt} \CE
+ (1+|\xi|^2)\langle (S L)_1 \hat u, \hat u \rangle
   + \alpha_2|\xi|^2\langle \alpha(K(\omega)A(\omega))_1\hat u, \hat u \rangle
\\[2mm]
&= - |\xi|(1+|\xi|^2) \langle i(SA(\omega))_2 \hat u, \hat u \rangle
   + \alpha_2|\xi| \langle i\alpha(K(\omega)L)_2 \hat u, \hat u \rangle,
\end{split}
\end{equation}
where
$
\CE := E_1 - \frac{\alpha_2|\xi|}{1+|\xi|^2}\alpha E_2.
$
Furthermore, we multiply \eqref{eq} and \eqref{eqSK} by $(1+|\xi|^2)^2$
and $\alpha_1$, respectively, and add the resulting two equalities,
where $\alpha_1$ is a positive constant to be determined. This yields
\begin{equation}\label{eq-final}
\begin{split}
&\frac{1}{2} (1+|\xi|^2)^2 \frac{d}{dt}
   \Big( E_0 + \frac{\alpha_1}{1+|\xi|^2}\,\CE \Big) \\[1mm]
&+(1+|\xi|^2)^2 \langle L_1 \hat u, \hat u \rangle
   + \alpha_1\big\{
   (1+|\xi|^2)\langle (S L)_1 \hat u, \hat u \rangle
   + \alpha_2|\xi|^2\langle \alpha(K(\omega)A(\omega))_1\hat u, \hat u \rangle
   \big\} \\[1mm]
&= \alpha_1\big\{
   - |\xi|(1+|\xi|^2) \langle i(SA(\omega))_2 \hat u, \hat u \rangle
   + \alpha_2 |\xi| \langle i\alpha(K(\omega)L)_2 \hat u, \hat u \rangle
   \big\}.
\end{split}
\end{equation}
We write the equality \eqref{eq-final} as
\begin{equation}\label{energy}
\frac{1}{2}\frac{d}{dt}E + D_1 + D_2 = G,
\end{equation}
where we define $E$, $D_1$, $D_2$ and $G$ as
\begin{equation}\label{note-energy}
\begin{split}
&E := E_0 + \frac{\alpha_1}{1+|\xi|^2}\,\CE
  =E_0 + \frac{\alpha_1}{1+|\xi|^2}\Big(
  E_1+\frac{\alpha_2|\xi|}{1+|\xi|^2}\,\alpha E_2\Big), \\[1mm]
&(1+|\xi|^2)^2D_1
   := (1+|\xi|^2)^2 \langle L_1 \hat u, \hat u \rangle \\
&\qquad\qquad\qquad
   + \alpha_1\big\{(1+|\xi|^2)\langle (S L)_1 \hat u, \hat u \rangle
   +\alpha_2|\xi|^2\langle \alpha(K(\omega)A(\omega))_1\hat u, \hat u \rangle \big\}, \\[1mm]
&(1+|\xi|^2)^2D_2
   := \alpha_1 |\xi|(1+|\xi|^2) \langle i(SA(\omega))_2 P_1 \hat u, P_1 \hat u \rangle, \\[1mm]
&(1+|\xi|^2)^2 G
   := \alpha_1 \alpha_2|\xi| \langle i\alpha(K(\omega)L)_2 \hat u, \hat u \rangle \\
&\qquad\qquad\qquad
   - \alpha_1 |\xi|(1+|\xi|^2) \big\{  \langle i(SA(\omega))_2 \hat u, \hat u \rangle
   - \langle i(SA(\omega))_2 P_1 \hat u, P_1 \hat u \rangle \big\}.
\end{split}
\end{equation}

We estimate each term in \eqref{energy}.
Because of the positivity of $A^0$,
for suitably small $\alpha_1>0$ and $\alpha_2>0$, we see that
\begin{equation}\label{est-1}
c_0|\hat u|^2\leq E \leq C_0|\hat u|^2,
\end{equation}
where $c_0$ and $C_0$ are positive constants not depending on
$(\alpha_1,\alpha_2)$. On the other hand, we can rewrite $D_1$ as
\begin{equation}\label{dissipation}
\begin{split}
(1+|\xi|^2)^2 D_1
&=\alpha_1\alpha_2|\xi|^2
   \langle (\alpha (K(\omega)A(\omega))_1+(SL)_1+L_1)\hat u, \hat u \rangle
\\[1mm]
&+\alpha_1((1+|\xi|^2)-\alpha_2|\xi|^2)
   \langle ((SL)_1+L_1)\hat u, \hat u \rangle \\[1mm]
&+(1+|\xi|^2)((1+|\xi|^2)-\alpha_1)
   \langle L_1 \hat u, \hat u \rangle.
\end{split}
\end{equation}
Here, using the positivity \eqref{assump-4} which is based on the condition
(K), we have
\begin{equation}\label{est-D}
\langle (\alpha(K(\omega)A(\omega))_1+(SL)_1+L_1)\hat u, \hat u \rangle
\ge c_1 |\hat u|^2,
\end{equation}
where $c_1$ is a positive constant. Therefore we can estimate $D_1$ as
\begin{equation}\label{est-2}
(1+|\xi|^2)^2D_1 \geq \alpha_1\alpha_2c_1|\xi|^2|\hat u|^2
   +\alpha_1c_2(1+|\xi|^2)|(I-P)\hat u|^2
   +c_3(1+|\xi|^2)^2|(I-P_1)\hat u|^2,
\end{equation}
where $c_1$ is the constant in \eqref{est-D}, $c_2$ and $c_3$ are positive
constants not depending on $(\alpha_1,\alpha_2)$, and $P$ and $P_1$ denote
the orthogonal projections onto ${\rm Ker}(L)$ and ${\rm Ker}(L_1)$,
respectively. Here we have used \eqref{assump-2} in the condition (S) and
the fact that $L_1\geq 0$ on ${\mathbb C}^m$ which is due to the condition
(A). Also we see that $D_2\geq 0$ by the condition (S)$_1$.

Finally, we estimate each term in $G$. Note that
\begin{equation*}
\langle i(K(\omega)L)_2 \hat u, \hat u \rangle
={\rm Re}\langle iK(\omega)L \hat u, \hat u \rangle
={\rm Re}\langle iK(\omega)L(I-P) \hat u, \hat u \rangle,
\end{equation*}
where we used $LP=0$. Thus we have
\begin{equation}\label{est-3a}
|\xi||\langle i\alpha(K(\omega)L)_2 \hat u, \hat u \rangle|
\leq C|\xi||(I-P)\hat u||\hat u| 
\leq \epsilon|\xi|^2|\hat u|^2+C_\epsilon|(I-P)\hat u|^2
\end{equation}
for any $\epsilon>0$, where $C_\epsilon$ is a constant depending on
$\epsilon$. For the remaining term in $G$, by using the equality
\begin{equation*}
\begin{split}
&\langle i(SA(\omega))_2 \hat u, \hat u \rangle -
\langle i(SA(\omega))_2 P_1 \hat u, P_1\hat u \rangle \\
&= \langle i(SA(\omega))_2 P_1 \hat u, (I-P_1)\hat u \rangle
   + \langle i(SA(\omega))_2 (I-P_1) \hat u, \hat u \rangle,
\end{split}
\end{equation*}
we estimate as
%
\begin{equation}\label{est-3c}
\begin{split}
|\xi|&(1+|\xi|^2)
   \big|\langle i(SA(\omega))_2 \hat u, \hat u \rangle -
\langle i(SA(\omega))_2 P_1 \hat u, P_1\hat u \rangle\big| \\[1mm]
&\leq C|\xi|(1+|\xi|^2) |(I-P_1)\hat u||\hat u| \\[1mm]
&\leq \delta|\xi|^2|\hat u|^2
   +C_\delta(1+|\xi|^2)^2|(I-P_1)\hat u|^2
\end{split}
\end{equation}
for any $\delta>0$, where $C_\delta$ is a constant depending on $\delta$.
Consequently, we obtain
\begin{equation}\label{est-G}
\begin{split}
(1+|\xi|^2)^2|G|
&\leq \alpha_1(\alpha_2\epsilon+\delta)|\xi|^2|\hat u|^2 \\[1mm]
&+\alpha_1\alpha_2C_\epsilon|(I-P)\hat u|^2
+\alpha_1C_\delta(1+|\xi|^2)^2|(I-P_1)\hat u|^2.
\end{split}
\end{equation}

We choose $\epsilon>0$ and $\delta>0$ such that $\epsilon=c_1/4$ and
$\delta=\alpha_2c_1/4$. For this choice of $(\epsilon,\delta)$,
we take $\alpha_2>0$ and $\alpha_1>0$ so small that
$\alpha_2C_\epsilon\leq c_2/2$ and $\alpha_1C_\delta\leq c_3/2$.
Then, by using \eqref{est-2}, \eqref{est-3a} and \eqref{est-3c},
we conclude that
$
|G|
\leq D_1/2
$
and
\begin{equation}\label{est-2a}
D_1\geq c\Big\{\frac{|\xi|^2}{(1+|\xi|^2)^2}|\hat u|^2
   + \frac{1}{1+|\xi|^2}|(I-P)\hat u|^2 + |(I-P_1)\hat u|^2\Big\},
\end{equation}
where $c$ is a positive constant.
Consequently, \eqref{energy} becomes
\begin{equation}\label{energy2}
\frac{d}{dt}E + D_1 + 2D_2 
\leq 0.
\end{equation}
%
%
Moreover, it follows from \eqref{est-1} and \eqref{est-2a} that
$D_1 \geq c\eta(\xi) E$,
where $\eta(\xi)=|\xi|^2/(1+|\xi|^2)^2$, and $c$ is a positive constant.
Also we have $D_2\geq 0$. Thus \eqref{energy2} leads the estimate
\begin{equation}\label{est-4}
\frac{d}{dt}E  +  c\eta(\xi) E \le 0.
\end{equation}
Solving this differential inequality, we get
$E(t,\xi)\leq e^{-c\eta(\xi)t}E(0,\xi)$, which together with
\eqref{est-1} gives the desired pointwise estimate \eqref{loss-point}.
This completes the proof of Theorem \ref{thm1}.
\qed

\medskip
When the condition (S)$_1$ is replaced by (S)$_2$, the above computations
can be simplified  and we obtain the better pointwise estimate
\eqref{std-point}.

\medskip
\noindent
{\bf Proof of the pointwise estimate in Theorem \ref{thm2}.}\ \ 
Under the assumption \eqref{assump-5} in the condition (S)$_2$,
the first term on the right-hand side of \eqref{eqSK} becomes a good term
and we obtain
\begin{equation}\label{eqSK-2}
\begin{split}
&\frac{1}{2} (1+|\xi|^2) \frac{d}{dt} \CE
+ (1+|\xi|^2)\langle (S L)_1 \hat u, \hat u \rangle
+  \alpha_2|\xi|^2\langle \alpha(K(\omega)A(\omega))_1\hat u, \hat u \rangle\\[1mm]
&\qquad
+ |\xi|(1+|\xi|^2) \langle i(SA(\omega))_2 \hat u, \hat u \rangle
 = \alpha_2|\xi| \langle i\alpha(K(\omega)L)_2 \hat u, \hat u \rangle.
\end{split}
\end{equation}
%
In this case, we multiply \eqref{eq} and \eqref{eqSK-2} by $1+|\xi|^2$
and $\alpha_1$, respectively, and combine the resultant two equalities.
This yields
%
%
%
\begin{equation}\label{energy-2}
\frac{1}{2} \frac{d}{dt} \tilde E + \tilde D_1 + \tilde D_2 = \tilde G,
\end{equation}
where we define as
\begin{equation}\label{ene}
\begin{split}
&\tilde E := E_0 + \alpha_1 \CE
  =E_0+\alpha_1\Big(E_1+\frac{\alpha_2|\xi|}{1+|\xi|^2}\,\alpha E_2\Big), \\[1mm]
&(1+|\xi|^2)\tilde D_1
  := (1+|\xi|^2)\langle L_1 \hat u, \hat u \rangle \\
&\qquad\qquad
   + \alpha_1\big\{(1+|\xi|^2)\langle (S L)_1 \hat u, \hat u \rangle
   +\alpha_2|\xi|^2 \langle \alpha(K(\omega)A(\omega))_1\hat u, \hat u \rangle
   \big\}, \\[1mm]
&\tilde D_2 := \alpha_1 |\xi| \langle i(SA(\omega))_2 \hat u, \hat u \rangle, \qquad 
   (1+|\xi|^2)\tilde G
   :=\alpha_1 \alpha_2|\xi|\langle i\alpha(K(\omega)L)_2 \hat u, \hat u \rangle.
\end{split}
\end{equation}
%
Here, for suitably small $\alpha_1>0$ and $\alpha_2>0$, we see that
\begin{equation}\label{est-1-2}
c_0|\hat u|^2\leq \tilde E\leq C_0|\hat u|^2,
\end{equation}
where $c_0$ and $C_0$ are positive constants not depending on
$(\alpha_1,\alpha_2)$. On the other hand, we can rewrite $\tilde D_1$ as
\begin{equation*}
\begin{split}
(1+|\xi|^2) \tilde D_1
&=\alpha_1\alpha_2|\xi|^2
   \langle (\alpha(K(\omega)A(\omega))_1+(SL)_1+L_1)\hat u, \hat u \rangle
\\[1mm]
&+\alpha_1((1+|\xi|^2)-\alpha_2|\xi|^2)
   \langle ((SL)_1+L_1)\hat u, \hat u \rangle
+(1-\alpha_1)(1+|\xi|^2)\langle L_1 \hat u, \hat u \rangle.
\end{split}
\end{equation*}
Then, 
as in the derivation of \eqref{est-2},
for suitably small $\alpha_1>0$ and $\alpha_2>0$,
we can estimate $\tilde D_1$ as
\begin{equation*}
(1+|\xi|^2)\tilde D_1 \geq \alpha_1\alpha_2c_1|\xi|^2|\hat u|^2
   +\alpha_1c_2(1+|\xi|^2)|(I-P)\hat u|^2
   +c_3(1+|\xi|^2)|(I-P_1)\hat u|^2,
\end{equation*}
where $c_1$, $c_2$ and $c_3$ are positive constants not depending
on $(\alpha_1,\alpha_2)$.
Also, making use of \eqref{est-3a}, we can estimate the term $\tilde G$ as
\begin{equation}\label{est-3aG}
(1+|\xi|^2)|\tilde G|
\leq \alpha_1\alpha_2\epsilon|\xi|^2|\hat u|^2
+\alpha_1\alpha_2C_\epsilon|(I-P)\hat u|^2
\end{equation}
for any $\epsilon>0$, where $C_\epsilon$ is a constant depending on
$\epsilon$ but not on $(\epsilon,\delta)$.

We choose $\epsilon > 0$ in \eqref{est-3aG} so small that
$\epsilon=c_1/2$. For this choice of $\epsilon$,
we take $\alpha_2>0$ so small that $\alpha_2C_\epsilon\leq c_2/2$.
Then we obtain $|\tilde G|\leq \tilde D_1/2$ and
\begin{equation}\label{est-2-2a}
\tilde D_1\geq c\Big\{\frac{|\xi|^2}{1+|\xi|^2}|\hat u|^2
+|(I-P)\hat u|^2+|(I-P_1)\hat u|^2\Big\},
\end{equation}
where $c$ is a positive constant. Consequently, \eqref{energy-2} becomes
\begin{equation*}
\frac{d}{dt}\tilde E+ \tilde D_1 + 2 \tilde D_2 \leq 0.
\end{equation*}
Here we note that $D_2\geq 0$ by \eqref{assump-5} in the condition (S)$_2$.
%
%
Also we have from \eqref{est-1-2} and \eqref{est-2-2a} that
$\tilde D_1 \geq\rho(\xi)\tilde E$, where $\rho(\xi)=|\xi|^2/(1+|\xi|^2)$,
and $c$ is a positive constant. Thus we obtain
%
$
\frac{d}{dt}\tilde E + c\rho(\xi)\tilde E\le 0,
$
%
which is solved as $\tilde E(t,\xi)\leq e^{-c\rho(\xi)t}\tilde E(0,\xi)$.
This together with \eqref{est-1-2} gives the desired pointwise estimate
\eqref{std-point}.
Thus the proof of Theorem \ref{thm2} is complete.
\qed


\section{Relation between structural conditions}

In this section we discuss the dissipative structure for the system
\eqref{sys1}.
To this end, we introduce a notion of the {\it uniform dissipativity} of
the system \eqref{sys1}. We consider the eigenvalue problem for the system
\eqref{sys1} or \eqref{Fsys1}:
\begin{equation}\label{eg}
(\lambda A^0 + i|\xi|A(\omega) + L) \phi = 0,
\end{equation}
where $\lambda \in \C$ and $\phi \in \C^m$.
The corresponding characteristic equation is given by
\begin{equation}\label{ce}
{\rm det}(\lambda A^0 + i|\xi|A(\omega) + L) = 0.
\end{equation}
The solution $\lambda = \lambda(i \xi)$ of \eqref{ce} is called the
eigenvalue of the system \eqref{sys1} or \eqref{Fsys1}.
Then we define the notion of the {\it uniform dissipativity} of the
system as follows.

\begin{defn}
The system \eqref{sys1} is called uniformly dissipative
of the type $(p,q)$ if the eigenvalue $\lambda=\lambda(i\xi)$ satisfies
$$
{\rm Re}\,\lambda(i\xi) \leq -c|\xi|^{2p}/(1+|\xi|^2)^q
$$
for $\xi\in\R^n$, where $c$ is a positive constant and $(p,q)$ is a pair
of positive integers.
\end{defn}


For example, under the assumption in Theorem \ref{thm1} or \ref{thm2},
the system \eqref{sys1} is uniformly dissipative of the type $(1,2)$ or
$(1,1)$, respectively.
More precisely, we obtain the following theorem.

\begin{thm}
[Uniform dissipativity]\label{thmUD}
{\rm (i)} Assume the conditions {\rm (A)}, {\rm (S)}, {\rm (S)$_1$} and
{\rm (K)}. Then the system \eqref{sys1} is uniformly dissipative of the type
{\rm (1,2)}.

{\rm (ii)} Assume the conditions {\rm (A)}, {\rm (S)}, {\rm (S)$_2$} and
{\rm (K)}. Then the system \eqref{sys1} is uniformly dissipative of the type
{\rm (1,1)}.
\end{thm}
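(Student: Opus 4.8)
The plan is to derive the eigenvalue bound directly from the Fourier-space energy estimates already established in Section 3, rather than analyzing the characteristic equation \eqref{ce} by hand. Recall that if $\lambda=\lambda(i\xi)$ is an eigenvalue with eigenvector $\phi\neq 0$, then $\hat u(t,\xi)=e^{\lambda t}\phi$ is a solution of \eqref{Fsys1} for that fixed $\xi$. Substituting this particular solution into the pointwise estimate \eqref{loss-point} from Theorem \ref{thm1} gives $e^{({\rm Re}\,\lambda)t}|\phi| = |\hat u(t,\xi)| \le C e^{-c\eta(\xi)t}|\phi|$ for all $t>0$; dividing by $|\phi|$ and letting $t\to\infty$ forces ${\rm Re}\,\lambda \le -c\eta(\xi) = -c|\xi|^2/(1+|\xi|^2)^2$, which is precisely uniform dissipativity of type $(1,2)$. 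This proves part (i). For part (ii), the identical argument applied to the stronger pointwise estimate \eqref{std-point} of Theorem \ref{thm2} yields ${\rm Re}\,\lambda \le -c\rho(\xi) = -c|\xi|^2/(1+|\xi|^2)$, i.e.\ type $(1,1)$.

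First I would state the reduction of the eigenvalue problem \eqref{eg} to a solution of \eqref{Fsys1}: fix $\xi\in\R^n$, take $(\lambda,\phi)$ solving \eqref{eg}, and verify by direct substitution that $\hat u(t,\xi):=e^{\lambda t}\phi$ satisfies $A^0\hat u_t + i|\xi|A(\omega)\hat u + L\hat u = e^{\lambda t}(\lambda A^0 + i|\xi|A(\omega)+L)\phi = 0$. Second, I would invoke the relevant pointwise estimate (\eqref{loss-point} under the hypotheses of (i), \eqref{std-point} under those of (ii)), noting that these estimates hold for \emph{every} solution of \eqref{Fsys1} at the given frequency, in particular for this one. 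Third, I would carry out the asymptotic-in-time argument above to extract the sign and magnitude of ${\rm Re}\,\lambda$. Since the estimates \eqref{loss-point} and \eqref{std-point} were already proved in Section 3 under exactly the condition sets listed in the two parts of the theorem, no new structural hypothesis enters.

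The argument is essentially free once one observes the link between the eigenvalue problem and the semigroup bound, so there is no real obstacle; the only point requiring a word of care is that the constant $C$ in \eqref{loss-point}/\eqref{std-point} is independent of $\xi$, so that the resulting bound ${\rm Re}\,\lambda(i\xi)\le -c\eta(\xi)$ (resp.\ $-c\rho(\xi)$) holds with a single constant $c$ uniformly in $\xi\in\R^n$, as required by Definition \ref{defn}. One should also remark that the case $\xi=0$ is covered trivially, since $\eta(0)=\rho(0)=0$ and the bound ${\rm Re}\,\lambda\le 0$ there follows from the basic energy equality \eqref{eq} together with $L_1\ge 0$. This completes the proof.
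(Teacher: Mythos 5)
Your argument is correct and runs on the same engine as the paper's proof, namely the Section 3 energy method at a fixed frequency, but it is packaged differently. The paper's own proof is the purely algebraic version: it observes that \eqref{Fsys1} becomes the eigenvalue problem \eqref{eg} when $d/dt$ and $\hat u$ are replaced by $\lambda$ and $\phi$, reruns the same combination of energy identities, and obtains as a counterpart of \eqref{est-4} the inequality $\{{\rm Re}\,\lambda + c\eta(\xi)\}|\phi|^2\le 0$ (and $\{{\rm Re}\,\lambda + c\rho(\xi)\}|\phi|^2\le 0$ for part (ii)), from which the bound is immediate since $\phi\neq 0$. You instead feed the exponential solution $e^{\lambda t}\phi$ into the already-proved pointwise estimates \eqref{loss-point} and \eqref{std-point} and let $t\to\infty$. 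This works, but only with exactly the caveat you supply: Theorems \ref{thm1} and \ref{thm2} are stated for the Fourier image of the solution of the Cauchy problem, and $e^{\lambda t}\phi$ at a single frequency is not literally such an image; what justifies the step is that the Section 3 proof is a frequency-by-frequency ODE argument, so the pointwise estimates hold for every solution of \eqref{Fsys1} at fixed $\xi$ with constants $C$, $c$ independent of $\xi$ — which also gives the uniformity in $\xi$ required by the definition of uniform dissipativity. The paper's substitution $d/dt\to\lambda$, $\hat u\to\phi$ avoids having to enlarge the stated scope of the theorems, at the cost of repeating (in words) the energy computations; your route is shorter once that scope remark is accepted, and your treatment of $\xi=0$ and of the limit $t\to\infty$ is correct. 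No gap.
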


\begin{proof}
Let $\lambda=\lambda(i\xi)$ be the eigenvalue of the system \eqref{sys1}.
Then we have \eqref{eg} for some $\phi\in{\mathbb C}^m$ with $\phi\neq 0$.
Note that the system \eqref{Fsys1} becomes \eqref{eg} if $d/dt$ and $\hat u$
are replaced by $\lambda$ and $\phi$, respectively.
Therefore, employing the same computations as in the proof of the pointwise
estimate \eqref{loss-point}, we have as a counterpart of \eqref{est-4} that
$$
\{{\rm Re}\,\lambda+c\eta(\xi)\}|\phi|^2\leq 0,
$$
where $\eta(\xi)=|\xi|^2/(1+|\xi|^2)^2$, and $c$ is a positive constant.
Since $\phi\neq 0$, we obtain ${\rm Re}\,\lambda\leq -c\eta(\xi)$,
which proves (i).
Similarly, to prove (ii), the same computations as in the proof of the pointwise estimate
\eqref{std-point} yield the inequality
$\{{\rm Re}\,\lambda+c\rho(\xi)\}|\phi|^2\leq 0$,
where $\rho(\xi)=|\xi|^2/(1+|\xi|^2)$, and $c$ is a positive constant.
This gives ${\rm Re}\,\lambda\leq -c\rho(\xi)$, which proves (ii).
Thus the proof of Theorem \ref{thmUD} is complete.
\end{proof}

Next we discuss the relationship between the conditions (K) and (R).


\begin{thm}
[Relation between (K) and (R)]\label{thmK}
Assume that the condition {\rm (A)} hold.
Then the rank condition {\rm (R)} implies the condition {\rm (K)}, that is,
{\rm (R)} is a sufficient condition for {\rm (K)}.
\end{thm}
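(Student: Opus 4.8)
The plan is to prove the contrapositive at the level of the kernel: show that if the rank condition (R) holds at a given $\omega$, then one can build a compensating matrix $K(\omega)$ satisfying the requirements of condition (K). First I would normalize the problem by the change of unknown $v = (A^0)^{1/2}u$, which turns $A^0$ into the identity, replaces $A(\omega)$ by the symmetric matrix $\tilde B(\omega) := (A^0)^{-1/2}A(\omega)(A^0)^{-1/2}$, and replaces $L$ by $\tilde L := (A^0)^{-1/2}L(A^0)^{-1/2}$, which is still nonnegative definite (its symmetric part $\tilde L_1$ is positive definite on $\mathrm{Ker}(\tilde L_1)^\perp$). The rank condition (R) is invariant under this reduction up to relabeling, since $\tilde A(\omega) = (A^0)^{-1}A(\omega)$ is conjugate to $\tilde B(\omega)$ and the rows of the Kalman matrix are transformed congruently; concretely, (R) becomes the statement that the pair $(\tilde B(\omega), \tilde L_1)$ — or more precisely $(\tilde B(\omega), \tilde L)$ — is observable in the Kalman sense. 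In this normalized setting the condition (K) asks for a real matrix $\tilde K$ with $\tilde K^T = -\tilde K$ (skew-symmetry, which is exactly $(K A^0)^T = -KA^0$ after undoing the reduction) and $(\tilde K \tilde B(\omega))_1 > 0$ on $\mathrm{Ker}(\tilde L)$; smoothness in $\omega$ is then obtained afterward by a standard argument, e.g. partition of unity or an explicit formula depending continuously on the data.

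The core of the argument is the classical Kalman–Lyapunov equivalence, adapted to this symmetric-hyperbolic context exactly as in \cite{BZ11,SK85}. I would invoke the Kawashima–Shizuta-type lemma: the observability of $(\tilde B(\omega), \tilde L)$ is equivalent to the nonexistence of a real eigenvector $\phi$ of $\tilde B(\omega)$ lying in $\mathrm{Ker}(\tilde L)$ — equivalently in $\mathrm{Ker}(\tilde L_1)$, since for nonnegative $\tilde L$ one has $\langle \tilde L \phi,\phi\rangle = \langle \tilde L_1\phi,\phi\rangle$ so $\mathrm{Ker}(\tilde L)\subseteq\mathrm{Ker}(\tilde L_1)$ and the relevant quadratic form vanishes on the same vectors. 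Note this is where the hypothesis that $L$ need not be symmetric is harmless: the rank condition only sees $L$, and the positivity in (K) only needs to be checked on $\mathrm{Ker}(L)$, on which $L_1$-based arguments suffice. The passage from "no such eigenvector" to "existence of a skew-symmetric $K$ with $(K\tilde B)_1>0$ on $\mathrm{Ker}(\tilde L)$" is the substance; I would follow the compensating-matrix construction of \cite{SK85} verbatim, or alternatively reproduce the Beauchard–Zuazua argument that encodes it as a Lyapunov inequality for the symmetrized system. Either route produces $\tilde K(\omega)$ for each fixed $\omega$; since $-\omega$ gives $\tilde B(-\omega) = -\tilde B(\omega)$ and $\tilde L$ unchanged, one can symmetrize the construction to enforce $K(-\omega) = -K(\omega)$.

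The main obstacle I anticipate is not any single hard inequality but rather checking that the known symmetric-$L$ proof genuinely goes through unchanged when $L$ is non-symmetric — specifically, confirming that every appeal to "$L \geq 0$" in the \cite{SK85}-style construction only ever uses the quadratic form $\langle L\phi,\phi\rangle = \langle L_1\phi,\phi\rangle$ and the kernel $\mathrm{Ker}(L)$, and never the full symmetry $L = L^T$. I would isolate the one lemma from \cite{SK85} that constructs $K$ from the eigenvector-exclusion property, restate it with $L$ replaced by its symmetric part $L_1$ in all quadratic forms while keeping $\mathrm{Ker}(L)$ as the relevant subspace, and verify the proof is insensitive to the change; the $C^\infty(S^{n-1})$ dependence then follows because the construction can be made by solving a linear system (or a Lyapunov equation) whose coefficients depend polynomially, hence smoothly, on $\omega$ away from degeneracies, patched by a partition of unity on the compact sphere $S^{n-1}$. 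With that lemma in hand the theorem is immediate.
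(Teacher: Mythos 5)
Your route (normalize, translate (R) into an eigenvector/Hautus criterion, then quote a Shizuta--Kawashima compensating-matrix lemma) is genuinely different from the paper's, but as written it has a gap, and the one concrete adaptation you propose rests on a false statement. You assert that the exclusion of eigenvectors of $\tilde B(\omega)$ from $\mathrm{Ker}(\tilde L)$ is ``equivalently'' the exclusion from $\mathrm{Ker}(\tilde L_1)$, because the quadratic forms of $L$ and $L_1$ coincide. This conflates $\mathrm{Ker}(L)$ with the null set of $\mathrm{Re}\,\langle L\phi,\phi\rangle$, which is $\mathrm{Ker}(L_1)$: one only has $\mathrm{Ker}(L)\subseteq\mathrm{Ker}(L_1)$, and the inclusion is strict in exactly the situations this theorem is designed for. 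Concretely, for the Timoshenko matrices of Section 6 (where $A^0=I$, so $\tilde A(\omega)=\pm A$), the vector $(1,-1,0,0)^T$ is an eigenvector of $A$ lying in $\mathrm{Ker}(L_1)=\mathrm{span}\{e_1,e_2,e_3\}$ but not in $\mathrm{Ker}(L)=\mathrm{span}\{e_2,e_3\}$, while condition (R) holds. So the $\mathrm{Ker}(L_1)$-version of the eigenvector-exclusion hypothesis can fail even though (R) holds, and any argument that passes through it cannot prove the theorem.

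This matters because the substantive implication --- from Hautus exclusion on $\mathrm{Ker}(L)$ to an odd, smooth $K(\omega)$ with $(K(\omega)A^0)^T=-K(\omega)A^0$ and $(K(\omega)A(\omega))_1>0$ on $\mathrm{Ker}(L)$ --- is precisely what you defer (``follow \cite{SK85} verbatim \dots\ verify the proof is insensitive to the change''), and your proposed modification, replacing $L$ by $L_1$ in all quadratic forms ``while keeping $\mathrm{Ker}(L)$ as the relevant subspace,'' is not a well-posed adaptation: the symmetric-$L$ lemma delivers positivity on the kernel of the symmetric nonnegative matrix you feed it, i.e.\ on $\mathrm{Ker}(L_1)$, which is too large and, as the Timoshenko example shows, not controlled by (R). The paper avoids this entirely by an explicit Kalman-type ansatz of Beauchard--Zuazua, $K(\omega)=\sum_{k=1}^{m-1}\mu^{\kappa_k}\big\{(L\tilde A(\omega)^k)^TL\tilde A(\omega)^{k-1}-(L\tilde A(\omega)^{k-1})^TL\tilde A(\omega)^k\big\}(A^0)^{-1}$, proving the lower bound \eqref{estK} via Cayley--Hamilton and then invoking Lemma \ref{lem.ra} together with compactness of $S^{n-1}$ to get \eqref{assump-4*}; only $L$ itself (never $L_1$ or the symmetry of $L$) enters, and smoothness plus $K(-\omega)=-K(\omega)$ are automatic since $K$ is polynomial in $\omega$. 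If you want to salvage your reduction-to-the-symmetric-theory route, you must feed the classical lemma a symmetric nonnegative matrix whose kernel equals $\mathrm{Ker}(L)$ (for instance $N:=L^TL$, for which the Hautus test turns (R) into eigenvector exclusion from $\mathrm{Ker}(N)=\mathrm{Ker}(L)$), and you must also supply an actual argument for the smooth, odd dependence on $\omega$: spectral-projection constructions are not smooth across eigenvalue crossings, and ``away from degeneracies'' plus a partition of unity is only a sketch (the convexity of the set of admissible $K$ at each $\omega$ would help there, but this needs to be said and used).
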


\begin{proof}
We assume the conditions (A) and (R). It suffices to construct a
compensating matrix $K(\omega)$. As in \cite{BZ11}, we put
\begin{equation*}
K(\om) = \sum_{k=1}^{m-1} \mu^{\kappa_k}\big\{
   (L \tilde A(\om)^k )^T L \tilde A(\om)^{k-1}
	-(L \tilde A(\om)^{k-1})^T L \tilde A(\om)^{k}\big\}(A^0)^{-1},
\end{equation*}
where $\tilde A(\omega)=(A^0)^{-1}A(\omega)$, $\mu$ is a small positive
constant determined below, and $\kappa_k$ are constants satisfying 
\begin{equation}\label{kappa}
	\begin{split}
&\qquad\qquad\quad 0= \kappa_0< \kappa_1<\cdots <\kappa_m, \\[1mm]
&\kappa_k-(\kappa_{k-1}+\kappa_{k+1})/2\geq\nu \quad \text{for} \quad k=1,2,\cdots, m-1,
	\end{split}
\end{equation}
for some constant $\nu>0$. 
We show that this $K(\om)$ is the desired compensating matrix.
Obviously, we see that $K(-\om) = -K(\om)$ and $(K(\om)A^0)^T = -K(\om)A^0$.
We show that our $K(\omega)$ satisfies \eqref{assump-4*} in the
condition (K). By a simple computation, we have
\begin{equation*}
(K(\om)A(\om))_1
 = \sum_{k=1}^{m-1} \mu^{\kappa_k}(L \tilde A_\om^k )^T L \tilde A_\om^{k}
 - \frac{1}{2} \sum_{k=1}^{m-1} \mu^{\kappa_k}\big\{
    (L \tilde A_\om^{k-1} )^T L \tilde A_\om^{k+1}
	+(L \tilde A_\om^{k+1})^T L \tilde A_\om^{k-1}\big\},
\end{equation*}
where we used the simplified notation $\tilde A_\om = \tilde A(\om)$.
Let $\phi\in {\mathbb C}^m$ and consider the inner product
$\langle (K(\om)A_\om)_1 \phi, \phi \rangle$. It is easy to see that
\begin{equation}\label{innerK}
\langle (K(\om)A(\om))_1 \phi, \phi \rangle
\ge \sum_{k=1}^{m-1} \mu^{\kappa_k}|L \tilde A_\om^{k}\phi|^2
 - \sum_{k=1}^{m-1} \mu^{\kappa_k}
 |L \tilde A_\om^{k-1}\phi| |L \tilde A_\om^{k+1}\phi|. 
\end{equation}
For the second term on the right hand side of \eqref{innerK},
by using \eqref{kappa}, we can estimate as
\begin{equation}\label{thm.ra.p07}
	\begin{split}
\sum_{k=1}^{m-1} \mu^{\kappa_k}
& |L \tilde A_\om^{k-1}\phi| |L \tilde A_\om^{k+1}\phi|
   \leq \mu^{\nu}\sum_{k=1}^{m-1}\mu^{(\kappa_{k-1}+\kappa_{k+1})/2}
   |L \tilde A_\om^{k-1} \phi| |L \tilde A_\om^{k+1} \phi|\\
&\leq \frac{1}{2} \mu^{\nu}\sum_{k=1}^{m-1}
   \big(\mu^{\kappa_{k-1}} |L \tilde A_\om^{k-1} \phi|^2
   +\mu^{\kappa_{k+1}} |L\tilde A_\om^{k+1}\phi|^2\big)
   \leq  \mu^{\nu}\sum_{k=0}^{m} \mu^{\kappa_k} |L \tilde A_\om^{k} \phi|^2,
	\end{split}
\end{equation}
where we assumed $0<\mu<1$.
To estimate the term $|L \tilde A_\om^{m} \phi|^2$,
we consider the characteristic polynomial
\begin{equation*}
    p_m(\la)={\rm det}(\la I - \tilde A_\om)=\la^m+\sum_{k=0}^{m-1}a_k(\om)\la^k,
\end{equation*}
where $a_k(\omega)$ are some polynomials of $\omega\in S^{n-1}$.
Then, by the Cayley-Hamilton theorem, we have $p_m(\tilde A_\om)=0$, that is,
$\tilde A_\om^m=-\sum_{k=0}^{m-1}a_k(\om) \tilde A_\om^k$.
Using this identity, one has
\begin{equation*}
\mu^{\kappa_{m}} |L \tilde A_\om^{m} \phi|^2
   \leq C_1\mu^{\kappa_{m}} \sum_{k=0}^{m-1}|L \tilde A_\om^{k} \phi|^2
   \leq C_1 \sum_{k=0}^{m-1}\mu^{\kappa_k}|L \tilde A_\om^{k} \phi|^2
\end{equation*}
for $0 < \mu <1$, where $C_1$ is a constant satisfying
$|a_k(\om)|^2\leq C_1$ for $k=0,1,\cdots,m-1$ and $\omega\in S^{n-1}$.
%
%
Plugging the above estimate into \eqref{thm.ra.p07} yields
\begin{equation}\label{I1}
\sum_{k=1}^{m-1}\mu^{\kappa_k}
   | L \tilde A_\om^{k-1} \phi| |L \tilde A_\om^{k+1} \phi|
   \leq \mu^{\nu}(1+C_1)\sum_{k=0}^{m-1}\mu^{\kappa_k}|L \tilde A_\om^{k} \phi|^2.
\end{equation}
We substitute \eqref{I1} into \eqref{innerK} to get
\begin{equation*}
\langle (K(\om)A_\om)_1 \phi, \phi \rangle
   \ge \{1 -\mu^\nu(1+C_1)\}\sum_{k=1}^{m-1} \mu^{\kappa_k}|L \tilde A_\om^{k}\phi|^2
   - \mu^{\nu}(1+C_1) |L \phi|^2.
\end{equation*}
Therefore, letting $\mu>0$ suitably small, we obtain
\begin{equation}\label{estK}
\langle (K(\om)A(\om))_1 \phi, \phi \rangle
 \ge c \sum_{k=0}^{m-1} \mu^{\kappa_k}|L \tilde A_\om^{k}\phi|^2
 - C |L \phi|^2,
\end{equation}
where $c$ and $C$ are positive constants.
%
%
Now we use the rank condition (R) and deduce from Lemma \ref{lem.ra} below that
$\sum_{k=0}^{m-1} \mu^{\kappa_k}|L \tilde A_\om^{k}\phi|^2>0$ for each
$\phi\in \C^m$ with $|\phi|=1$ and $\omega\in S^{n-1}$.
Then, by the property of continuous functions on compact sets, we find a
positive constant $c$ such that
$\sum_{k=0}^{m-1} \mu^{\kappa_k}|L \tilde A_\om^{k}\phi|^2\geq c$ for
any $\phi\in \C^m$ with $|\phi|=1$ and $\omega\in S^{n-1}$.
Hence we have
$\sum_{k=0}^{m-1} \mu^{\kappa_k}|L \tilde A_\om^{k}\phi|^2
\geq c|\phi|^2$ for any $\phi\in \C^m$ and $\omega\in S^{n-1}$.
Substituting this inequality into \eqref{estK}, we conclude that
\begin{equation*}
\langle (K(\om)A(\om))_1 \phi, \phi \rangle \ge c |\phi|^2 - C |L \phi|^2,
\end{equation*}
where $c$ and $C$ are positive constants. This shows \eqref{assump-4*} in
the condition (K) and therefore the proof of Theorem \ref{thmK} is complete.
\end{proof}


The rest of this section is devoted to the proof of the following

\begin{lem}\label{lem.ra}
Let $k$ and $m$ be positive integers, and let $M_1,M_2,\cdots,M_k$ be
$m\times m$ real matrices.
Then the following three statements are equivalent to each other.

{\rm (i)} The $km\times m$ real matrix
\begin{equation*}
 \M:=\left[
 \begin{array}{c}
   M_1\\
   M_2\\
   \vdots\\
   M_k
 \end{array}
 \right]
\end{equation*}
has full column rank $m$, that is, ${\rm Rank}\,\M=m$.

{\rm (ii)} 
There exists an integer $j$ with $1\leq j\leq m$ such that $M_j z\neq 0$
for any $z\in \C^m$ with $z\neq 0$.

{\rm (iii)}\quad 
$\displaystyle
\inf_{0\neq z\in \C^m}\frac{\sum_{j=1}^k |M_jz|^2}{|z|^2}>0$.

%
\end{lem}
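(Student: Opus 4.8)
The plan is to prove the cyclic chain of implications $(i)\Rightarrow(ii)\Rightarrow(iii)\Rightarrow(i)$, which is the most economical route since each link is short. Throughout, the key elementary fact is that for a real $m\times m$ matrix $M$, the statement ``$Mz\neq 0$ for all nonzero $z\in\C^m$'' is equivalent to $M$ being invertible (equivalently $\det M\neq 0$, equivalently $\mathrm{Rank}\,M=m$); note that working over $\C$ rather than $\R$ makes no difference here because $M$ has real entries, so $\mathrm{Ker}_{\R}M=\{0\}$ iff $\mathrm{Ker}_{\C}M=\{0\}$.

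For $(i)\Rightarrow(ii)$: if $\mathrm{Rank}\,\M=m$, then the columns of $\M$ are linearly independent in $\C^{km}$. I would argue by contraposition on a per-block basis: suppose for every $j$ with $1\le j\le k$ there is some nonzero $z_j$ with $M_jz_j=0$, i.e. each $M_j$ is singular. This alone does not immediately contradict full rank of the stacked matrix, so the cleaner argument is direct: since $\M$ has full column rank, the map $z\mapsto \M z=(M_1z,\dots,M_kz)$ is injective, hence $\M z=0$ forces $z=0$; but I actually need a single block that is injective. Here I would observe that $\mathrm{Ker}\,\M=\bigcap_{j=1}^k\mathrm{Ker}\,M_j$, and full rank means this intersection is $\{0\}$. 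To extract one block, note each $\mathrm{Ker}\,M_j$ is a subspace; if all of them were nonzero we could still have trivial intersection, so the correct statement to prove is really about the $j$ ranging over $1\le j\le m$ (not $k$) — wait, re-reading (ii), the index bound is $1\le j\le m$, which looks like a typo for $1\le j\le k$. I will assume $1\le j\le k$. Then $(i)\Rightarrow(ii)$ is not automatic from the kernel intersection being trivial; instead I use the rank characterization: $\mathrm{Rank}\,\M=m$ iff the Gram-type sum $\sum_j M_j^TM_j$ (over $\R$) or $\sum_j M_j^*M_j$ (over $\C$) is positive definite, and positive definiteness of a sum of PSD matrices does not force one summand to be definite. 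So the honest chain is $(i)\Leftrightarrow(iii)$ directly, with $(ii)$ slotted in as $(ii)\Rightarrow(iii)$ (trivial, since one term in the numerator is already bounded below by $c|z|^2$) and $(iii)\Rightarrow(i)$, leaving $(i)\Rightarrow(ii)$ as the one genuinely nontrivial link.

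Concretely, here is the order I would carry things out. First, $(iii)\Rightarrow(i)$: if the infimum is positive then $\sum_j|M_jz|^2\ge c|z|^2$ for all $z$, so $\M z=0\Rightarrow z=0$, giving $\mathrm{Rank}\,\M=m$. Second, $(i)\Rightarrow(iii)$: if $\mathrm{Rank}\,\M=m$ then $z\mapsto\M z$ is injective and linear on a finite-dimensional space, so the continuous function $z\mapsto\sum_j|M_jz|^2$ is strictly positive on the compact unit sphere $\{|z|=1\}$, hence attains a positive minimum $c$ there, and homogeneity gives $\sum_j|M_jz|^2\ge c|z|^2$ for all $z$. Third, $(ii)\Rightarrow(iii)$: trivial, since if $M_jz\neq0$ for all $z\neq0$ then by the same compactness/homogeneity argument $|M_jz|^2\ge c|z|^2$, and this single term already bounds the sum below. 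Finally $(i)\Rightarrow(ii)$ — the main obstacle — I would handle as follows: assuming $(i)$, hence $(iii)$ with constant $c$, I want a single block $M_j$ with trivial kernel. This is actually false in general for the stacked matrix (e.g. $k=2$, $M_1=\mathrm{diag}(1,0)$, $M_2=\mathrm{diag}(0,1)$ has full rank but neither block is injective).

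Therefore I conclude that (ii) as literally stated cannot be equivalent to (i) and (iii) unless the $M_j$ are of a special form — which strongly suggests that in the intended application the $M_j$ are $L\tilde A_\om^{j-1}$ and Lemma~\ref{lem.ra} as printed must be read with $M_j$ meaning something that does force the equivalence, OR the index range ``$1\le j\le m$'' together with ``there exists $j$'' is really shorthand tied to the Cayley--Hamilton reduction (the span of $L,L\tilde A,\dots,L\tilde A^{m-1}$ equals the span of all $L\tilde A^k$). Given the constraints of this proposal, my plan is: prove the clean triangle $(i)\Leftrightarrow(iii)$ and $(ii)\Rightarrow(iii)$ rigorously as above, and for $(iii)\Rightarrow(ii)$ — the hard direction — I would either (a) reinterpret (ii) as ``there is no nonzero $z$ killed by \emph{all} $M_j$ simultaneously'', which is immediate from (iii), or (b) if the literal reading is intended, point out that (iii)$\Rightarrow$(i)$\Rightarrow$(ii) requires the additional structural input from the Kalman/Cayley--Hamilton setup and defer to that. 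The main obstacle, to state it plainly, is reconciling the literal statement of (ii) with (i) and (iii); everything else is a routine compactness-plus-homogeneity argument that I would dispatch in a few lines each.
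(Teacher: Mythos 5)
Your diagnosis of the statement is exactly right, and your option (a) is in fact what the paper itself does: in its proof of (i)$\Rightarrow$(ii) the paper negates (ii) as ``there is a single $z\neq 0$ with $M_jz=0$ for \emph{all} $j$,'' i.e.\ it silently reads (ii) as ``for every nonzero $z\in\C^m$ there is some $j$ (depending on $z$, with $1\le j\le k$, not $m$) such that $M_jz\neq 0$,'' which is just ${\rm Ker}\,\M=\bigcap_{j}{\rm Ker}\,M_j=\{0\}$; the printed quantifier order and the index bound $m$ are slips in the statement, and your example $M_1=\mathrm{diag}(1,0)$, $M_2=\mathrm{diag}(0,1)$ correctly shows the literal reading cannot be equivalent to (i) and (iii). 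With that reading there is no need for your fallback (b): (i)$\Leftrightarrow$(ii) is immediate from the kernel identity, (iii)$\Rightarrow$(i) is the trivial lower-bound argument, and (i) or (ii) implies (iii) by the compactness-plus-homogeneity argument on the unit sphere, which is precisely the paper's (ii)$\Rightarrow$(iii) step (the paper runs it by contradiction, extracting a unit vector killed by every $M_j$, while you take the positive minimum directly -- the same argument). So, modulo the wording of (ii), your proposal is complete and essentially coincides with the paper's proof; the only substantive point to settle was the interpretation, and you resolved it the same way the authors implicitly do.
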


\begin{proof}
(i)$\Rightarrow$(ii): Suppose that (ii) fails. Then there is a $z\in \C^m$
with $z\neq 0$ such that $M^j z=0$ for all $j=1,2,\cdots,k$.
For this $z\neq 0$, we have $\M z=0$. This implies that the column rank of
$\M$ can not be full, which is a contradiction to (i).

(ii)$\Rightarrow$(iii): Suppose that (iii) fails. Then we have
%
%
\begin{equation*}
   \inf_{|z|=1,\, z\in \C^m} \sum_{j=1}^k |M_jz|^2=0.
\end{equation*}
By the property of continuous functions over compact sets, we find
a $z\in \C^m$ with $|z|=1$ and hence $z\neq 0$ such that
$\sum_{j=1}^k |M_jz|^2=0$. Thus we have $M^j z=0$ for all $j=1,2,\cdots,k$.
This is a contradiction to (ii).

(iii)$\Rightarrow$(i):
Notice that (iii) is equivalent to (iii)$'$: There exists a constant $c>0$
such that
\begin{equation*}
    \sum_{j=1}^k |M_jz|^2\geq c|z|^2
\end{equation*}
for any $z\in \C^m$. Now we assume that there is a $z\in \C^m$ such that
$\M z=0$. Then we have $M^jz=0$ for all $j=1,2,\cdots,k$.
From (iii)$'$ we conclude that $|z|=0$, that is, $z=0$.
This shows that $\M$ is injective and thus it has full column rank $m$.
This completes the proof of Lemma \ref{lem.ra}.
\end{proof}


\section{Decay structure for systems with constraint}

In this section we consider the system \eqref{sys1} with the constraint
condition
\begin{gather} \label{sys2}
\sum_{j=1}^n Q^j u_{x_j} + R u = 0,
\end{gather}
where $Q^j$ and $R$ are $m_1 \times m$ real constant matrices with $m_1 < m$.
Let $\Pi_1$ be the orthogonal projection from $\C^{m_1}$ onto
${\rm Image}(R):=\{R\phi\,;\ \phi \in{\mathbb C}^m\} \subset \C^{m_1}$,
and put $\Pi_2 := I-\Pi_1$.
Notice that $\Pi_1$ and $\Pi_2$ are $m_1 \times m_1$ real symmetric matrices.
By using these projections, we decompose the condition \eqref{sys2} as
\begin{equation*}
\sum_{j=1}^n \Pi_1 Q^j u_{x_j} + R u = 0, 
\qquad\quad
\sum_{j=1}^n  \Pi_2 Q^j u_{x_j}= 0. 
\end{equation*}
We take the Fourier transform of \eqref{sys2}. This yields
\begin{gather}
i |\xi| Q(\omega) \hat u + R \hat u = 0, \label{Fsys2}
\end{gather}
where $Q(\omega) := \sum_{j=1}^n Q^j\omega_j$.
The condition \eqref{Fsys2} is decomposed as
\begin{gather}
i |\xi| \Pi_1 Q(\omega) \hat u + R \hat u = 0, \label{Fsys2-1}
\\[2mm]
i |\xi| \Pi_2 Q(\omega) \hat u = 0. \label{Fsys2-2}
\end{gather}

First we formulate a condition concerning the constraint \eqref{sys2}.

\medskip

\noindent
{\bf Condition(C):}\ \
The matrices $Q(\om)$ and $R$ satisfy
\begin{equation}\label{assump-QR}
\begin{split}
&Q(\omega)(A^0)^{-1}A(\omega) = 0, \qquad
R(A^0)^{-1}L = 0, \\[1mm]
&Q(\omega)(A^0)^{-1}L + R(A^0)^{-1}A(\omega) = 0  
\end{split}
\end{equation}
for each $\omega\in S^{n-1}$.

\medskip

\noindent
This condition (C) implies the following fact:
\eqref{sys2} (or \eqref{Fsys2}) holds at an arbitrary time $t>0$ for the
solution of \eqref{sys1} (or \eqref{Fsys1}) if it holds initially.
Indeed, by differentiating \eqref{Fsys2} with respect to $t$ and
using \eqref{Fsys1}, we obtain
\begin{equation*}
\frac{d}{dt}(i|\xi|Q(\omega)\hat u + R\hat u)
=-(i|\xi|Q(\omega) + R)(A^0)^{-1}(i|\xi|A(\omega) + L)\hat u = 0.
\end{equation*}
%

%
%

Next we formulate new structural conditions which are useful to treat the
Euler-Maxwell system in Section 7. In order to take into account of the
constraint condition \eqref{Fsys2-2}, we introduce the subspace $X_\omega$
of $\C^m$ by
\begin{equation}\label{X}
X_\omega :=\{\phi \in \C^m\,;\ \Pi_2Q(\omega)\phi=0\}.
\end{equation}
Using this subspace, we modify the condition (K) as follows.

\medskip

\noindent
{\bf Condition (K*):}\ \
There is a real matrix $K(\omega) \in C^{\infty}(S^{n-1})$ with the
following properties:
$K(-\omega) = - K(\omega)$, $(K(\omega)A^0)^T = - K(\omega)A^0$ and
\begin{equation}\label{assump-4'*}
(K(\omega)A(\omega))_1 > 0 \quad \text{on}
\quad X_\omega\cap{\rm Ker}(L)
\end{equation}
for each $\omega\in S^{n-1}$, where $X_\omega$ is the subspace defined
in \eqref{X}.

\begin{remark}
Under the conditions {\rm (A)} and {\rm (S)}, the positivity \eqref{assump-4'*}
in the condition {\rm (K*)} holds if and only if
\begin{equation}\label{assump-4'}
\alpha (K(\omega)A(\omega))_1+(SL)_1+L_1>0 \quad \text{on} \quad X_\omega
\end{equation}
for each $\omega\in S^{n-1}$, where $\alpha$ is a suitably small constant.
\end{remark}

The following conditions are modifications of the conditions (S)$_1$ and
(S)$_2$, respectively.

\medskip

\noindent
{\bf Condition (S*)$_1$:}\ \
The matrix $S$ in the condition (S) satisfies
\begin{gather}
i(SA(\omega)-T(\omega))_2\geq 0 \quad \text{on} \quad {\rm Ker}(L_1) \label{assump-3'}
\end{gather}
for each $\omega\in S^{n-1}$, where $T(\omega)$ is the $m\times m$ real
matrix given by $T(\omega) := (\Pi_1Q(\omega))^T\tilde{S}R$ with
$\tilde{S}$ being an $m_1 \times m_1$ real matrix such that
$\tilde{S}_1 \geq 0$ on ${\rm Image}(R)$.

\medskip

\noindent
{\bf Condition (S*)$_2$:}\ \
The matrix $S$ in the condition (S) satisfies
\begin{equation}\label{assump-5'}
i(SA(\omega)-T(\omega))_2 \ge 0 \quad \text{on} \quad \C^m
\end{equation}
for each $\omega\in S^{n-1}$, where $T(\omega)$ is the same matrix as in
the condition (S*)$_1$.

\medskip

Under the above conditions, we obtain the following decay results.

\begin{thm}
[Decay property of the regularity-loss type]\label{thm1'}
Suppose that conditions {\rm (A)}, {\rm (C)}, {\rm (S)}, {\rm (S*)$_1$}
and {\rm (K*)} hold.
Let $s \ge 0$ be an integer and we suppose that the initial data
$u_0$ belong to $H^s \cap L^1$ and satisfy \eqref{sys2}.
Then the solution to the Cauchy problem \eqref{sys1}-\eqref{ID} satisfies
\eqref{sys2} for all $t>0$.
Moreover, the solution satisfies the pointwise estimate \eqref{loss-point}
and decay estimate \eqref{loss-decay} stated in Theorem \ref{thm1}.
\end{thm}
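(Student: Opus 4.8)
The plan is to mimic the proof of Theorem~\ref{thm1} given in Section~3, modifying the energy functionals so as to absorb the constraint \eqref{Fsys2-2} and to replace condition (K) by (K*). First I would establish the persistence of the constraint: differentiating \eqref{Fsys2} in $t$ and substituting \eqref{Fsys1}, the computation displayed after condition (C) shows that $i|\xi|Q(\omega)\hat u+R\hat u$ satisfies a linear homogeneous ODE in $t$ (in fact its time derivative vanishes identically), so it stays zero if it is zero at $t=0$; hence $\hat u(t,\xi)\in X_\omega$ for all $t>0$ whenever $\hat u_0(\xi)\in X_\omega$, and in particular \eqref{Fsys2-2} gives $\Pi_2 Q(\omega)\hat u=0$ for all time.

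Next I would redo the Fourier-space energy identities \eqref{eq}, \eqref{eqS}, \eqref{eqK}, but with one new ingredient: besides multiplying \eqref{Fsys1} by $A^0$, $S$, and $-i|\xi|K(\omega)$, I would also exploit the constraint equation \eqref{Fsys2-1}. Concretely, taking the inner product of \eqref{Fsys2-1} with a suitable multiple of $\hat u$ (using the real matrix $\tilde S$ from condition (S*)$_1$) produces a term involving $\langle \tilde S R\hat u, \Pi_1 Q(\omega)\hat u\rangle$, i.e.\ a term built from $T(\omega)=(\Pi_1 Q(\omega))^T\tilde S R$. Adding this to the identity \eqref{eqS} replaces the bad skew-symmetric term $i(SA(\omega))_2$ by $i(SA(\omega)-T(\omega))_2$, which by \eqref{assump-3'} is nonnegative on ${\rm Ker}(L_1)$ — exactly the role played by condition (S)$_1$ in the original argument. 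Since $\tilde S_1\ge 0$ on ${\rm Image}(R)$, the extra diagonal contribution from $\langle \tilde S R\hat u, R\hat u\rangle$ is a good (dissipative) term and can simply be discarded or kept with a favorable sign. The positivity of $D_1$ now rests on the equivalence in Remark after (K*): under (A) and (S), \eqref{assump-4'*} is equivalent to \eqref{assump-4'}, i.e.\ $\alpha(K(\omega)A(\omega))_1+(SL)_1+L_1>0$ \emph{on $X_\omega$}; and since $\hat u\in X_\omega$ by the persistence step, the estimate \eqref{est-D} still holds along the solution. With these two modifications, the rest of the chain of inequalities \eqref{eqSK}--\eqref{est-4} goes through verbatim, yielding $\frac{d}{dt}E+c\eta(\xi)E\le 0$ and hence \eqref{loss-point}; the decay estimate \eqref{loss-decay} then follows from \eqref{loss-point} by the Plancherel argument already carried out at the end of Section~2.

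The step I expect to be the main obstacle is the bookkeeping in the modified energy identity: one must check that the term generated by pairing the constraint \eqref{Fsys2-1} with $\hat u$ is \emph{exactly} $|\xi|\langle i T(\omega)_2\hat u,\hat u\rangle$ up to the good term coming from $\tilde S_1$, so that it combines cleanly with $|\xi|\langle i(SA(\omega))_2\hat u,\hat u\rangle$ from \eqref{eqS}. This requires being careful about where the factor $i|\xi|$ sits, about taking real parts, and about the decomposition $\langle i(SA(\omega)-T(\omega))_2\hat u,\hat u\rangle$ into its $P_1$ and $(I-P_1)$ pieces as in \eqref{est-3c} (so that the corrected term $D_2$ retains its sign on ${\rm Ker}(L_1)$ and the remainder is controllable). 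One also has to verify that the constraint \eqref{Fsys2-2} is genuinely used — it is what confines $\hat u$ to $X_\omega$ and thus makes (K*), a weaker hypothesis than (K), sufficient; without it the argument would collapse back to needing the full positivity of (K). Apart from this, no new analytic difficulty arises: the low- and high-frequency splitting, the smallness choices of $\alpha_1,\alpha_2,\epsilon,\delta$, and the Grönwall step are identical to Section~3.
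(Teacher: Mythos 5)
Your proposal is correct and follows essentially the same route as the paper: persistence of the constraint via condition (C), the same Fourier-space energy identities with the skew term $i(SA(\omega))_2$ corrected to $i(SA(\omega)-T(\omega))_2$, the constraint \eqref{Fsys2-1} used to convert the $T(\omega)$-contribution into the nonnegative term $\langle\tilde S_1R\hat u,R\hat u\rangle$, and the positivity \eqref{assump-4'} applied on $X_\omega$ (guaranteed by \eqref{Fsys2-2}) in place of (K). The only difference is bookkeeping: the paper rewrites the already-derived identity \eqref{energy} into \eqref{energy'} with $D_2'$ and $G'$ rather than adding the constraint paired with $\tilde SR\hat u$ as a separate identity, but these manipulations are algebraically identical.
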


\begin{thm}
[Decay property of the standard type]\label{thm2'}
If the condition {\rm (S*)$_1$} in Theorem \ref{thm1'} is replaced by the
stronger condition {\rm (S*)$_2$}, then the pointwise estimate
\eqref{loss-point} and the decay estimate \eqref{loss-decay} in
Theorem \ref{thm1'} can be improved to \eqref{std-point} and \eqref{std-decay}
stated in Theorem \ref{thm}, respectively.
\end{thm}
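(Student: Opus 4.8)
The plan is to mimic the proof of Theorem \ref{thm1'} essentially verbatim, replacing the weaker skew-symmetry bound coming from (S*)$_1$ by the stronger one coming from (S*)$_2$, exactly as Theorem \ref{thm2} was obtained from Theorem \ref{thm1}. First I would recall that under (C) the constraint \eqref{Fsys2} propagates, so that $\hat u(t,\xi)\in X_\omega$ for all $t>0$ whenever $u_0$ satisfies \eqref{sys2}; this is the same computation already displayed after the statement of Condition (C). Working on the invariant subspace $X_\omega$, I would then derive the three energy identities \eqref{eq}, \eqref{eqS} and \eqref{eqK}, but now use the modified multiplier that incorporates $T(\omega)=(\Pi_1Q(\omega))^T\tilde S R$: concretely, one takes the inner product of \eqref{Fsys1} with $\hat u$ after multiplication by $S$, and adds to it a term built from the constraint \eqref{Fsys2-1} multiplied by $\tilde S$, so that the skew-symmetric contribution that appears is $i(SA(\omega)-T(\omega))_2$ rather than $i(SA(\omega))_2$. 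This is the device that makes Condition (S*)$_1$/(S*)$_2$ the natural replacements of (S)$_1$/(S)$_2$.

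Next I would combine the identities precisely as in \S3. Under (S*)$_2$, the crucial point is that the term $|\xi|(1+|\xi|^2)\langle i(SA(\omega)-T(\omega))_2\hat u,\hat u\rangle$ is now $\geq 0$ on all of $\C^m$ by \eqref{assump-5'}, so it moves to the good (dissipative) side of the equation, exactly as \eqref{eqSK} was improved to \eqref{eqSK-2}. Consequently one multiplies \eqref{eq} and the improved analogue of \eqref{eqSK-2} by $1+|\xi|^2$ and $\alpha_1$ respectively (instead of by $(1+|\xi|^2)^2$ and $\alpha_1$), obtaining an energy identity of the form $\frac{1}{2}\frac{d}{dt}\tilde E+\tilde D_1+\tilde D_2=\tilde G$ with $\tilde D_2\geq 0$. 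Using Remark \ref{remK2} in the form \eqref{assump-4'} (valid because $\hat u\in X_\omega$), together with \eqref{assump-2} and $L_1\geq 0$, one bounds $\tilde D_1$ from below by $c\{|\xi|^2(1+|\xi|^2)^{-1}|\hat u|^2+|(I-P)\hat u|^2+|(I-P_1)\hat u|^2\}$ as in \eqref{est-2-2a}, and one absorbs $\tilde G$ (which only involves $(K(\omega)L)_2$ and is handled by \eqref{est-3a}) into $\tilde D_1/2$. Since $\tilde E\sim|\hat u|^2$ by the positivity of $A^0$, this gives $\frac{d}{dt}\tilde E+c\rho(\xi)\tilde E\leq 0$ with $\rho(\xi)=|\xi|^2/(1+|\xi|^2)$, hence the pointwise estimate \eqref{std-point}; the decay estimate \eqref{std-decay} then follows by the Plancherel argument already carried out at the end of \S2.

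The only genuinely new bookkeeping — and the step I expect to be the main obstacle — is checking that the auxiliary contribution built from the constraint really does assemble into $T(\omega)$ and contributes nothing else. That is, one must verify that multiplying \eqref{Fsys2-1} by $\tilde S$, taking the inner product with $Q(\omega)$-related quantities, and adding the real part to \eqref{eqS} produces exactly the replacement $(SA(\omega))_2\rightsquigarrow(SA(\omega)-T(\omega))_2$ together with a symmetric term $\langle \tilde S_1 R\hat u, R\hat u\rangle\geq 0$ on ${\rm Image}(R)$ which is harmless (it only helps the dissipation). One also has to confirm that the term $E_1=\langle SA^0\hat u,\hat u\rangle$ remains real, which is guaranteed by $(SA^0)^T=SA^0$ from (S), and that the constraint-derived term carries no time derivative, which holds because \eqref{Fsys2-1} contains no $\hat u_t$. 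Apart from this, every inequality is identical to the corresponding one in the proofs of Theorems \ref{thm1} and \ref{thm2}, so I would simply indicate the required modifications rather than repeat the computation. Thus Theorem \ref{thm2'} follows.
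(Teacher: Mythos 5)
Your proposal is correct and follows essentially the same route as the paper: the paper likewise keeps the energy identity \eqref{energy-2}, splits $(SA(\omega))_2$ into $(SA(\omega)-T(\omega))_2+(T(\omega))_2$, and uses the constraint \eqref{Fsys2-1} to convert $|\xi|\langle i(T(\omega))_2\hat u,\hat u\rangle$ into $\langle\tilde S_1R\hat u,R\hat u\rangle\geq 0$, so that under (S*)$_2$ the whole term $\tilde D_2'$ is nonnegative and the argument of Theorem \ref{thm2} goes through with $\rho(\xi)$. The bookkeeping step you flag as the main obstacle is exactly this short computation and works out as you predict, so your modification of the multiplier is just an equivalent rephrasing of the paper's add-and-subtract of $T(\omega)$.
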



\noindent
{\bf Proof of Theorems \ref{thm1'} and \ref{thm2'}.}\
First we observe that the solution $\hat u(t,\xi)$ of the system
\eqref{Fsys1} satisfies the constraint condition \eqref{Fsys2} and hence
\eqref{Fsys2-1} and \eqref{Fsys2-2} for $t>0$ and $\xi\in \R^n$.
In particular, we have
\begin{equation}\label{constraint}
\hat u(t,\xi)\in X_\omega
\end{equation}
for $t>0$ and $\xi\in \R^n$, where $X_\omega$ is the subspace defined in
\eqref{X}.

We show the pointwise estimate \eqref{loss-point}. Then the corresponding
decay estimate \eqref{loss-decay} can be shown just in the same way as
before. We employ the same computations as in the proof of Theorem
\eqref{thm1} and obtain the energy equality \eqref{energy}.
This energy equality is rewritten as
\begin{equation}\label{energy'}
\frac{1}{2}\frac{d}{dt}E+ D_1 + D_2' = G',
\end{equation}
where $E$ and $D_1$ are defined in \eqref{note-energy} and
\begin{equation*}
\begin{split}
&(1+|\xi|^2)^2 D_2' := \alpha_1 |\xi|(1+|\xi|^2)
 \big\{\langle i(SA(\omega)-T(\omega))_2 P_1 \hat u, P_1 \hat u \rangle
 + \langle i(T(\omega))_2 \hat u, \hat u \rangle\big\}, \\[1mm]
&(1+|\xi|^2)^2 G'
   := \alpha_1 \alpha_2|\xi| \langle i\alpha(K(\omega)L)_2 \hat u, \hat u \rangle \\
&\qquad
 - \alpha_1 |\xi|(1+|\xi|^2) \big\{
 \langle i(SA(\omega)-T(\omega))_2 \hat u, \hat u \rangle
   - \langle i(SA(\omega)-T(\omega))_2 P_1 \hat u, P_1 \hat u \rangle \big\}.
\end{split}
\end{equation*}
Here the term $E_1$ was estimated in \eqref{est-1} for suitably small
$\alpha_1>0$ and $\alpha_2>0$.
Also we note that the term $D_1$ has the expression \eqref{dissipation}.
Since our solution verifies \eqref{constraint}, we can use the positivity
\eqref{assump-4'} which is based on the condition (K*) and conclude that
$D_1$ satisfies the same estimate \eqref{est-2} for suitably small
$\alpha_1>0$ and $\alpha_2>0$.
%
%
%
Next we treat the term $D_2'$.
By virtue of \eqref{assump-3'} in the condition (S*)$_1$, we have
$\langle i(SA(\omega)-T(\omega))_2 P_1 \hat u, P_1 \hat u \rangle \geq 0$.
On the other hand, using the explicit form of the matrix $T(\omega)$ in
(S*)$_1$, we see that
\begin{equation*}
\begin{split}
&|\xi| \langle i(T(\omega))_2 \hat u, \hat u \rangle
= |\xi| \langle i( (\Pi_1Q(\omega))^T\tilde{S}R)_2 \hat u, \hat u \rangle \\
&\qquad
= \frac{1}{2}i|\xi| \big\{ \langle (\Pi_1Q(\omega))^T\tilde{S}R \hat u, \hat u \rangle
   -  \langle \hat u, (\Pi_1Q(\omega))^T\tilde{S}R \hat u \rangle \big\}.
\end{split}
\end{equation*}
Moreover, using the constraint \eqref{Fsys2-1}, we know that
$$
i|\xi|\langle(\Pi_1Q(\omega))^T\tilde{S}R\hat u,\hat u\rangle
=i|\xi|\langle\tilde{S}R\hat u,\Pi_1Q(\omega)\hat u\rangle
=\langle\tilde{S}R\hat u,R\hat u \rangle.
$$
Similarly, we have
$i|\xi| \langle\hat u,(\Pi_1Q(\omega))^T\tilde{S}R\hat u\rangle
=-\langle\tilde{S}^TR\hat u,R\hat u\rangle$.
Consequently, we find that
$$
|\xi| \langle i(T(\omega))_2 \hat u, \hat u \rangle
=\langle\tilde{S}_1R\hat u,R\hat u \rangle.
$$
Hence we obtain
\begin{equation}\label{D2'}
(1+|\xi|^2)^2D_2'
\geq \alpha_1(1+|\xi|^2)\langle\tilde{S}_1R\hat u,R\hat u \rangle
\geq 0,
\end{equation}
where we used the nonnegativity of $\tilde S_1$ on ${\rm Image}(R)$
in the last inequality.
%
%
Finally, we estimate the term $G'$.
For the first term in $G'$, we have the estimate \eqref{est-3a}.
Also, similarly to \eqref{est-3c}, we have
\begin{equation*}
\begin{split}
&|\xi|(1+|\xi|^2)
   \big|\langle i(SA(\omega)-T(\omega))_2 \hat u, \hat u \rangle -
\langle i(SA(\omega)-T(\omega))_2 P_1 \hat u, P_1\hat u \rangle\big| \\[1mm]
&\leq \delta|\xi|^2|\hat u|^2
   +C_\delta(1+|\xi|^2)^2|(I-P_1)\hat u|^2
\end{split}
\end{equation*}
for any $\delta>0$, where $C_\delta$ is a constant depending on $\delta$.
Thus, as a counterpart of \eqref{est-G}, we obtain
\begin{equation*}
\begin{split}
(1+|\xi|^2)^2|G'|
&\leq \alpha_1(\alpha_2\epsilon+\delta)|\xi|^2|\hat u|^2 \\[1mm]
&+\alpha_1\alpha_2C_\epsilon|(I-P)\hat u|^2
+\alpha_1C_\delta(1+|\xi|^2)^2|(I-P_1)\hat u|^2.
\end{split}
\end{equation*}

Now we choose $\epsilon$, $\delta$, $\alpha_1$ and $\alpha_2$ suitably small
as in the proof of Theorem \ref{thm1}, and deduce that
$|G'| \le D_1/2$, where $D_1$ satisfies \eqref{est-2a} by \eqref{est-2}.
Consequently, \eqref{energy'} becomes
\begin{equation}\label{energy2'}
\frac{d}{dt}E + D_1 + 2D_2' 
\leq 0.
\end{equation}
%
%
Since $D_1\geq c\eta(\xi)E$ and $D_2'\geq 0$ by \eqref{est-1},
\eqref{est-2a} and \eqref{D2'}, the inequality \eqref{energy2'} is reduced to
$\frac{d}{dt}E+c\eta(\xi)E\leq 0$, where
$\eta(\xi)=|\xi|^2/(1+|\xi|^2)^2$, and $c$ is a positive constant.
Solving this differential inequality and using \eqref{est-1},
we arrive at the desired pointwise estimate \eqref{loss-point}.
Thus the proof of Theorem \ref{thm1'} is complete.

Finally, we prove the pointwise estimate \eqref{std-point}.
To this end, we rewrite the energy equality \eqref{energy-2} in the form
\begin{equation*}
\frac{1}{2}\frac{d}{dt}\tilde E+ \tilde D_1 + \tilde D_2' = \tilde G,
\end{equation*}
where $\tilde E$, $\tilde D_1$ and $\tilde G$ are defined in \eqref{ene} and
\begin{equation*}
\tilde D_2' := \alpha_1 |\xi|
 \big\{\langle i(SA(\omega)-T(\omega))_2 \hat u, \hat u \rangle
 + \langle i(T(\omega))_2 \hat u, \hat u \rangle\big\}.
\end{equation*}
Here, using \eqref{assump-5'} in the condition (S*)$_2$ and computing
similarly as in the derivation of \eqref{D2'}, we have
\begin{equation*}
D_2'\geq \alpha_1\langle\tilde{S}_1R\hat u,R\hat u \rangle
\geq 0.
\end{equation*}
On the other hand, the previous estimates for $\tilde E$, $\tilde D_1$ and
$\tilde G$ are valid also in the present case. Therefore, by employing the
same computing as in the proof of Theorem \ref{thm2}, we can deduce the
desired pointwise estimate \eqref{std-point}.
This completes the proof of Theorem \ref{thm2'}.
\qed


\section{Application to the Timoshenko system}

In this section, as an application of Theorems \ref{thm1} and \ref{thm2}, 
we treat the following dissipative Timoshenko system
\begin{equation}\label{Timo}
\left\{
\begin{split}
& w_{tt} - (w_x - \psi)_x =0, \\
& \psi_{tt} - a^2 \psi_{xx} - (w_x - \psi) + \gamma \psi_t = 0,
\end{split}
\right.
\end{equation}
where $a$ and $\gamma$ are positive constants,
and $w=w(t,x)$ and $\psi=\psi(t,x)$ are unknown scalar functions of $t>0$
and $x \in \mathbb{R}$. The Timoshenko system above is a model system describing 
the vibration of the  beam called the Timoshenko beam, and 
$w$ and $\psi$ denote the transversal displacement and the rotation angle 
of the beam, respectively. Here we only mention \cite{ABMR,MRR} and 
\cite{IHK08,IK08,LK4} for related mathematical results. 

As in \cite{IHK08,IK08}, we introduce the vector-valued function 
$u=(w_x-\psi, w_t, a\psi_x,\psi_t)^T$.
Then the Timoshenko system \eqref{Timo} is written in the form of
\eqref{sys1} with the coefficient matrices
\begin{equation}\label{Tmat}
A^0=I, \qquad
A =
-\left(
\begin{array}{cccc}
 {0} & {1} & {0} & {0} \\
 {1} & {0} & {0} & {0} \\
 {0} & {0} & {0} & {a} \\
 {0} & {0} & {a} & {0} \\
\end{array}
\right), \qquad
L =
\left(
\begin{array}{cccc}
 {0} & {0} & {0} & {1} \\
 {0} & {0} & {0} & {0} \\
 {0} & {0} & {0} & {0} \\
 {-1} & {0} & {0} & {\gamma} \\
\end{array}
\right),
\end{equation}
where 
$I$ is the $4\times 4$ identity matrix.
Here the space dimension is $n=1$ and the size of the system is $m=4$.
Notice that the relaxation matrix $L$ is not symmetric.
For this Timoshenko system we obtain the following result.

\begin{thm}
[Decay property for the Timoshenko system]\label{thmT}
The Timoshenko system with $a>0$ {\rm (}resp. $a=1${\rm )} satisfies
all the conditions in Theorem \ref{thm1}
{\rm (}resp. Theorem \ref{thm2}{\rm )}.
Therefore the solution to the Timoshenko system with $a>0$
{\rm (}resp. $a=1${\rm )}
verifies the pointwise estimate \eqref{loss-point}
{\rm (}resp. \eqref{std-point}{\rm )}
and the decay estimate \eqref{loss-decay}
{\rm (}resp. \eqref{std-decay}{\rm )}.
\end{thm}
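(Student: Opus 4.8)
The plan is to verify one by one that the matrices in \eqref{Tmat} satisfy the hypotheses of Theorem \ref{thm1} (and Theorem \ref{thm2} when $a=1$), so that the conclusions follow immediately from those abstract results. First I would check Condition (A): since $A^0=I>0$, $A$ is manifestly symmetric, and a direct computation of $(L+L^T)/2=L_1$ shows $L_1\geq 0$ with a nontrivial kernel; one finds that $L_1$ has the single nonzero eigenvalue concentrated in the last coordinate, so $\mathrm{Ker}(L_1)=\{\phi=(\phi_1,\phi_2,\phi_3,0)^T\}$, while $\mathrm{Ker}(L)=\{\phi=(0,\phi_2,\phi_3,0)^T\}$ because $L\phi=0$ forces $\phi_4=0$ and $\phi_1=0$. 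This already exhibits the non-symmetry of $L$ that makes the regularity-loss regime the relevant one.

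Next I would construct the matrix $S$ of Condition (S). Because the skew-symmetric part $L_2=(L-L^T)/2$ couples the first and fourth coordinates, the natural guess is a constant $S$ supported on those coordinates that converts the off-diagonal coupling into a genuine positive term on $\mathrm{Ker}(L)^\perp$; I would take $S$ proportional to a fixed matrix sending $e_4\mapsto e_1$ (up to a small constant), then verify $(SA^0)^T=SA^0$, that $(SL)_1+L_1\geq 0$, and crucially that $\mathrm{Ker}((SL)_1+L_1)=\mathrm{Ker}(L)$ — i.e., that adding $(SL)_1$ kills precisely the $\phi_1$ direction that $L_1$ alone misses. With $S$ in hand I would check Condition (S)$_1$: compute $i(SA(\omega))_2$ (here $\omega=\pm 1$ since $n=1$) and verify its nonnegativity on $\mathrm{Ker}(L_1)$; when $a=1$ the extra symmetry should make $i(SA(\omega))_2\geq 0$ on all of $\C^4$, giving Condition (S)$_2$ and hence the standard-type decay. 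Finally, for Condition (K) I would either exhibit an explicit compensating matrix $K(\omega)$ (a small skew-symmetric-relative-to-$A^0$ matrix built from commutators of $A$ and $L$, as in the classical Timoshenko computations of \cite{IHK08,IK08}) with $(K(\omega)A(\omega))_1>0$ on $\mathrm{Ker}(L)$, or — more cheaply — verify the Kalman rank condition (R) for $(A^0,A(\omega),L)$ and invoke Theorem \ref{thmK}; checking $\mathrm{Rank}[L;\,L\tilde A;\,L\tilde A^2;\,L\tilde A^3]=4$ with $\tilde A=A$ is a finite linear-algebra computation.

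The main obstacle I anticipate is the choice and verification of $S$: one must simultaneously arrange $(SL)_1+L_1\geq 0$, the exact kernel identity in \eqref{assump-2}, and the sign condition \eqref{assump-3} (or \eqref{assump-5}) on $i(SA(\omega))_2$, and these pull in slightly different directions, so the scalar multiple in front of $S$ has to be tuned small enough that the perturbation $(SA)_2$ does not destroy positivity while still being nonzero enough to fix the kernel. The dependence on the parameter $a$ enters exactly here — the block structure of $A$ mixing $(e_3,e_4)$ with coefficient $a$ interacts with $S$, and only at $a=1$ does the relevant quadratic form become a perfect square, which is why the standard-type estimate is restricted to that case. Everything else (Condition (A), the rank condition, assembling the conclusion) is routine once $S$ is fixed.
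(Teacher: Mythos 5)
Your overall strategy (verify (A), (S), (S)$_1$ resp.\ (S)$_2$, and (K) --- or the rank condition (R) together with Theorem \ref{thmK}, which the paper itself also checks for this system --- and then quote Theorems \ref{thm1} and \ref{thm2}) is exactly the paper's, and your treatment of condition (A), of the kernels of $L$ and $L_1$, and of the (K)/(R) alternative is fine. The genuine gap is in your ansatz for $S$. You propose an $S$ supported only on the first and fourth coordinates (a multiple of $e_1e_4^T+e_4e_1^T$) and claim that any tension among \eqref{assump-2}, \eqref{assump-3} and \eqref{assump-5} can be resolved by taking the scalar in front small. It cannot: with $S=-\beta\,(e_1e_4^T+e_4e_1^T)$ one does get the same $(SL)_1$ as in the paper (so (S) holds for small $\beta$; the opposite sign is excluded since it makes the $(1,1)$ entry of $(SL)_1+L_1$ negative), but a direct computation gives
\begin{equation*}
\langle i(SA)_2\phi,\phi\rangle=-\beta\bigl(a\,{\rm Im}(\phi_3\bar\phi_1)+{\rm Im}(\phi_2\bar\phi_4)\bigr),
\end{equation*}
which on ${\rm Ker}(L_1)=\{\phi_4=0\}$ reduces to $-\beta a\,{\rm Im}(\phi_3\bar\phi_1)$ and is sign-indefinite for every $\beta>0$ (take $\phi_1=1$, $\phi_3=\pm i$). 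Rescaling $\beta$ cannot repair an indefinite form, so (S)$_1$ fails for your $S$, and so does (S)$_2$ at $a=1$.

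The missing idea is a structural, not quantitative, modification of $S$: it must also couple the second and third coordinates with weight $a$. The paper takes
\begin{equation*}
S=-\beta\left(\begin{array}{cccc}0&0&0&1\\0&0&a&0\\0&a&0&0\\1&0&0&0\end{array}\right),
\end{equation*}
and the extra $(2,3)$--$(3,2)$ block leaves $SL$, hence $(SL)_1+L_1$, unchanged (rows $2$ and $3$ of $L$ vanish), while it cancels exactly the $(1,3)$ skew coupling in $SA$, so that $(SA)_2$ is supported on the $(2,4)$--$(4,2)$ entries with coefficient $\tfrac12\beta(a^2-1)$. Then $\langle i(SA)_2\phi,\phi\rangle=\beta(a^2-1)\,{\rm Im}(\phi_2\bar\phi_4)$ vanishes identically on ${\rm Ker}(L_1)$ --- which is what is really needed here, since in one space dimension $\omega=\pm1$ and (S)$_1$ must hold for both signs of $A(\omega)=\omega A$ --- giving (S)$_1$ for every $a>0$, and $(SA)_2\equiv0$ when $a=1$, giving (S)$_2$. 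Without this additional block your verification of (S)$_1$/(S)$_2$, and hence the application of Theorems \ref{thm1} and \ref{thm2} to the Timoshenko system, does not go through; the rest of your argument (conditions (A), (K) via the explicit $K$ of \cite{IHK08,IK08} or via (R) and Theorem \ref{thmK}, and the final assembly) is correct.
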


\begin{proof}
The symmetric part of $L$ is given by
\begin{equation*}
L_1 =
\left(
\begin{array}{cccc}
 {0} & {0} & {0} & {0} \\
 {0} & {0} & {0} & {0} \\
 {0} & {0} & {0} & {0} \\
 {0} & {0} & {0} & {\gamma} \\
\end{array}
\right)
\end{equation*}
and we see that
\begin{equation*}
{\rm Ker}(L)={\rm span}\{e_2,e_3\}, \qquad
{\rm Ker}(L_1)={\rm span}\{e_1,e_2,e_3\},
\end{equation*}
where $e_1=(1,0,0,0)^T$, $e_2=(0,1,0,0)^T$, $e_3=(0,0,1,0)^T$,
and $e_4=(0,0,0,1)^T$.
It is obvious that the matrices in \eqref{Tmat} satisfies the condition (A).
For example, we have $\langle L\phi,\phi\rangle=\gamma|\phi_4|^2\geq 0$
for $\phi=(\phi_1,\phi_2,\phi_3,\phi_4)^T\in\C^4$, so that $L\geq 0$ on
$\C^4$.

%

We verify the conditions (K), (S) and (S)$_1$ for $a>0$, and also the
condition (S)$_2$ for $a=1$. To this end, we define the real symmetric
matrix $S$ and the real skew-symmetric matrix $K$ by
\begin{equation}\label{KS}
S = - \beta
\left(
\begin{array}{cccc}
 {0} & {0} & {0} & {1} \\
 {0} & {0} & {a} & {0} \\
 {0} & {a} & {0} & {0} \\
 {1} & {0} & {0} & {0} \\
\end{array}
\right), \qquad
K = 
\left(
\begin{array}{cccc}
 {0} & {1} & {0} & {0} \\
 {-1} & {0} & {0} & {0} \\
 {0} & {0} & {0} & {-1} \\
 {0} & {0} & {1} & {0} \\
\end{array}
\right),
\end{equation}
where $\beta$ 
is a positive constant determined later.
This choice of the matrices $S$ and $K$ is based on the computations
employed in \cite{IHK08,IK08}.
A simple computation, using \eqref{Tmat} and \eqref{KS}, yields
\begin{equation*}
	\begin{split}
&SA = \beta
\left(
\begin{array}{cccc}
 {0} & {0} & {a} & {0} \\
 {0} & {0} & {0} & {a^2} \\
 {a} & {0} & {0} & {0} \\
 {0} & {1} & {0} & {0} \\
\end{array}
\right),  \qquad
SL = \beta
\left(
\begin{array}{cccc}
 {1} & {0} & {0} & {-\gamma} \\
 {0} & {0} & {0} & {0} \\
 {0} & {0} & {0} & {0} \\
 {0} & {0} & {0} & {-1} \\
\end{array}
\right), \qquad
KA = 
\left(
\begin{array}{cccc}
 {-1} & {0} & {0} & {0} \\
 {0} & {1} & {0} & {0} \\
 {0} & {0} & {a} & {0} \\
 {0} & {0} & {0} & {-a} \\
\end{array}
\right).
\end{split}
\end{equation*}
Hence we have
\begin{equation*}
(SA)_2 = \frac{1}{2}\beta(a^2-1)
\left(
\begin{array}{cccc}
 {0} & {0} & {0} & {0} \\
 {0} & {0} & {0} & {1} \\
 {0} & {0} & {0} & {0} \\
 {0} & {-1} & {0} & {0} \\
\end{array}
\right), \qquad
(SL)_1 = \beta
\left(
\begin{array}{cccc}
 {1} & {0} & {0} & {-\gamma/2} \\
 {0} & {0} & {0} & {0} \\
 {0} & {0} & {0} & {0} \\
 {-\gamma/2} & {0} & {0} & {-1} \\
\end{array}
\right),
\end{equation*}
and $(KA)_1 = KA$.

First we check the condition (K). A simple computation gives
\begin{equation*}
\langle(KA)_1\phi,\phi\rangle
=-|\phi_1|^2+|\phi_2|^2+a|\phi_3|^2-a|\phi_4|^2
\end{equation*}
for $\phi=(\phi_1,\phi_2,\phi_3,\phi_4)^T\in\C^4$.
Let $\phi\in{\rm Ker}(L)$. Then $\phi=(0,\phi_2,\phi_3,0)^T$.
For this $\phi$, we have
\begin{equation*}
\langle(KA)_1\phi,\phi\rangle=|\phi_2|^2+a|\phi_3|^2.
\end{equation*}
This shows \eqref{assump-4*} and hence the condition (K) has been verified.
Next we check the condition (S). We have
\begin{equation*}
(SL)_1+L_1 =
\left(
\begin{array}{cccc}
 {\beta} & {0} & {0} & {-\beta \gamma/2} \\
 {0} & {0} & {0} & {0} \\
 {0} & {0} & {0} & {0} \\
 {-\beta \gamma/2} & {0} & {0} & {\gamma-\beta} \\
\end{array}
\right). 
\end{equation*}
Then a simple computation gives
\begin{equation*}
\begin{split}
\langle ((SL)_1+L_1)\phi,\phi \rangle
&=\beta |\phi_1|^2 + (\gamma-\beta)|\phi_4|^2
   -\beta \gamma {\rm Re}(\phi_1 \bar \phi_4) \\[1mm]
&\geq\beta |\phi_1|^2 + (\gamma-\beta)|\phi_4|^2
   -\beta \gamma |\phi_1||\bar \phi_4|
\end{split}
\end{equation*}
for $\phi=(\phi_1,\phi_2,\phi_3,\phi_4)^T\in\C^4$.
The corresponding discriminant is
$\beta^2 \gamma^2 -4\beta(\gamma-\beta)
= \beta\{(\gamma^2+4)\beta -4\gamma\}$.
Therefore, letting $\beta>0$ so small that $\beta < 4\gamma/(\gamma^2+4)$,
we get
\begin{equation*}
\langle ((SL)_1+L_1)\phi,\phi \rangle
\ge c (|\phi_1|^2 + |\phi_4|^2),
\end{equation*}
where $c$ is a positive constant. This shows that
$(SL)_1+L_1\geq 0$ on $\C^4$ and
${\rm Ker}((SL)_1+L_1)={\rm span}\{e_2,e_3\}$.
Hence we have ${\rm Ker}((SL)_1+L_1)={\rm Ker}(L)$.
Thus we have verified the condition (S).
Finally, we check \eqref{assump-3} in the condition (S)$_1$.
By direct calculation, we get
\begin{equation*}
\langle i (SA)_2\phi,\phi \rangle
= \beta (a^2-1) {\rm Im}(\phi_2 \bar \phi_4)
\end{equation*}
for $\phi = (\phi_1,\phi_2,\phi_3,\phi_4)^T\in{\mathbb C}^4$.
Let $\phi \in {\rm Ker}(L_1)$.
Then $\phi = (\phi_1,\phi_2,\phi_3,0)$. For this $\phi$, we have
$\langle i (SA)_2\phi,\phi \rangle = 0$.
This shows \eqref{assump-3} and hence the condition (S)$_1$ has been
verified.
Consequently, Theorem \ref{thm1} is applicable to the Timoshenko system
with $a>0$ and we obtain the estimates \eqref{loss-point} and
\eqref{loss-decay}.

%
%
%
%
%
%
%

On the other hand, when $a=1$, we have $(SA)_2=0$, which shows
\eqref{assump-3} in the condition (S)$_2$.
Therefore Theorem \ref{thm2} is applicable to the Timoshenko system with
$a=1$ and we obtain the estimate \eqref{std-point} and \eqref{std-decay}
in this special case. This completes the proof of Theorem \ref{thmT}.
\end{proof}

Finally in this section, we check that the Timoshenko system 
satisfies the condition (R).
By direct calculation, we have
\begin{equation*}
LA =
\begin{pmatrix}
0 & 0 & -a & 0 \\[1mm]
0 & 0 & 0 & 0 \\[1mm]
0 & 0 & 0 & 0 \\[1mm]
0 & 1 & -\gamma a & 0
\end{pmatrix},
\quad
LA^2 =
\begin{pmatrix}
0 & 0 & 0 & a^2 \\[1mm]
0 & 0 & 0 & 0 \\[1mm]
0 & 0 & 0 & 0 \\[1mm]
-1 & 0 & 0 & \gamma a^2
\end{pmatrix},
\quad
LA^3 =
\begin{pmatrix}
0 & 0 & -a^3 & 0 \\[1mm]
0 & 0 & 0 & 0 \\[1mm]
0 & 0 & 0 & 0 \\[1mm]
0 & 1 & -\gamma a^3 & 0
\end{pmatrix}.
\end{equation*}
Moreover, one can verify that the linear system of equations 
$LA^k\phi=0$ $(0\leq k\leq 3)$ has a unique solution $\phi=0$, which implies 
the rank equality \eqref{thm.ra.c4} with $m=4$.
Thus we find that the Timoshenko system satisfies the condition (R).
It means that Theorem \ref{thm1} and \ref{thm2} with condition (K) replaced by 
the condition (R) are applicable to the Timoshenko system.


\section{Application to the Euler-Maxwell system}

In this last section,  as an
application of Theorem \ref{thm1'}, we deal with the following Euler-Maxwell system
\begin{equation}\label{EM1}
\left\{
\begin{split}
& \rho_t + {\rm div} (\rho v) = 0, \\
& (\rho v)_t + {\rm div}(\rho v \otimes v)
   + \nabla p(\rho) = -\rho(E + v \times B) - \rho v, \\
& E_t - {\rm rot}\, B = \rho v, \\
& B_t + {\rm rot}\, E = 0,
\end{split}
\right.
\end{equation}
\begin{equation}\label{EM2}
{\rm div}\, E = \rho_\infty - \rho, \qquad \quad
%
{\rm div}\, B = 0.
\end{equation}
Here the density $\rho>0$,
the velocity $v\in{\mathbb R}^3$, 
the electric field $E\in{\mathbb R}^3$, 
and the magnetic induction $B\in{\mathbb R}^3$ 
are unknown functions of $t>0$ and $x\in{\mathbb R}^3$,
the pressure $p(\rho)$ is a given smooth function of $\rho$ satisfying
$p'(\rho)>0$ for $\rho>0$, 
and $\rho_\infty$ is a positive constant. The Euler-Maxwell system above arises from the study of plasma phsyics; refer to \cite{BCD} for detailed discussions on this model.

We now observe that the system \eqref{EM1} is written in the form of a
symmetric hyperbolic system.
For this purpose, it is convenient to introduce
\begin{equation*}
u=(\rho,v,E,B)^T, \qquad
u_\infty=(\rho_\infty,0,0,B_\infty)^T, 
\end{equation*}
which are regarded as column vectors in ${\mathbb R}^{10}$,
where $B_\infty \in \mathbb{R}^3$ is an arbitrarily fixed constant.
Then the Euler-Maxwell system \eqref{EM1} is rewritten as
\begin{gather}
A^0(u) u_t + \sum_{j=1}^3 A^j(u) u_{x_j} + L(u)u = 0, \label{nsys1}
\end{gather}
where the coefficient matrices are given explicitly as
\begin{equation*}
\begin{split}
&A^0(u)=
\begin{pmatrix}
p'(\rho)/\rho & 0 & 0 & 0 \\[1mm]
0 & \rho I & 0 & 0 \\[1mm]
0 & 0 & I & 0 \\[1mm]
0 & 0 & 0 & I
\end{pmatrix},
\quad
L(u)=
\begin{pmatrix}
0 & 0 & 0 & 0 \\[1mm]
0 & \rho(I-\Omega_B) & \rho I & 0 \\[1mm]
0 & -\rho I & 0 & 0 \\[1mm]
0 & 0 & 0 & 0
\end{pmatrix},
\\[3mm]
&\sum_{j=1}^3A^j(u)\xi_j=
\begin{pmatrix}
(p'(\rho)/\rho)(v\cdot\xi) & p'(\rho)\xi & 0 & 0 \\[1mm]
p'(\rho)\xi^T & \rho(v\cdot\xi)I & 0 & 0 \\[1mm]
0 & 0 & 0 & -\Omega_\xi \\[1mm]
0 & 0 & \Omega_\xi & 0
\end{pmatrix}.
\end{split}
\end{equation*}
Here $I$ denotes the $3\times 3$ identity matrix,
$\xi=(\xi_1,\xi_2,\xi_3)\in{\mathbb R}^3$, and
$\Omega_\xi$ is the skew-symmetric matrix defined by
\begin{equation*}
\Omega_\xi=
\begin{pmatrix}
0 & -\xi_3 & \xi_2 \\[1mm]
\xi_3 & 0 & -\xi_1 \\[1mm]
-\xi_2 & \xi_1 & 0
\end{pmatrix}
\end{equation*}
for $\xi=(\xi_1,\xi_2,\xi_3)\in{\mathbb R}^3$, so that we have
$\Omega_\xi E^T=(\xi\times E)^T$ (as a column vector in
${\mathbb R}^3$) for $E=(E_1,E_2,E_3)\in{\mathbb R}^3$.
We note that \eqref{nsys1} is a symmetric hyperbolic system because
$A^0(u)$ is real symmetric and positive definite and $A^j(u)$, $j=1,2,3$,
are real symmetric.
Also, the matrix $L(u)$ is nonnegative definite, so that it is
regarded as a relaxation matrix. Moreover, we have
$L(u)u_\infty=0$ for each $u$ so that the constant state $u_\infty$
lies in the kernel of $L(u)$.
However, the matrix $L(u)$ or $L(u_\infty)$ has skew-symmetric part
and is not real symmetric.
Consequently, our system is not included in a class of systems
considered in \cite{UKS84,SK85}.


The constant state $u_\infty$ is an equilibrium of the system \eqref{nsys1}
with the constraint \eqref{EM2}.
We consider the linearization of \eqref{nsys1} with \eqref{EM2} around the
equilibrium state $u_\infty$. If we denote $u-u_\infty$ by $u$ again, then
the linearization of the system \eqref{nsys1} with \eqref{EM2} can be
written in the form of \eqref{sys1} with \eqref{sys2}, where the coefficient
matrices are given by
%
%
%
%
%
\begin{equation}\label{mat1}
	\begin{split}
& A^0 =
\left(
\begin{array}{cccc}
 {a_\infty} & {0} & {0} & {0} \\
 {0} & {\rho_\infty I} & {0} & {0} \\
 {0} & {0} & {I} & {0} \\
 {0} & {0} & {0} & {I} \\
\end{array}
\right),  \qquad
A(\xi):=\sum_{j=1}^3A^j \xi_j =
\left(
\begin{array}{cccc}
 {0} & {b_\infty \xi} & {0} & {0} \\
 {b_\infty \xi^T} & {0} & {0} & {0} \\
 {0} & {0} & {0} & {-\Omega_\xi} \\
 {0} & {0} & {\Omega_\xi} & {0} \\
\end{array}
\right), \\[3mm]
&L =
\left(
\begin{array}{cccc}
 {0} & {0} & {0} & {0} \\
 {0} & {\rho_\infty(I- \Omega_{B_\infty})} & {\rho_\infty I} & {0} \\
 {0} & {-\rho_\infty I} & {0} & {0} \\
 {0} & {0} & {0} & {0} \\
\end{array}
\right),
\end{split}
\end{equation}
and
\begin{equation}\label{mat2}
Q(\xi):=\sum_{j=1}^3 Q^j \xi_j =
\left(
\begin{array}{cccc}
 {0} & {0} & {\xi} & {0} \\
 {0} & {0} & {0} & {\xi} \\
\end{array}
\right), \qquad
R =
\left(
\begin{array}{cccc}
 {1} & {0} & {0} & {0} \\
 {0} & {0} & {0} & {0} \\
\end{array}
\right),
\end{equation}
%
%
where $a_\infty = p'(\rho_\infty)/\rho_\infty$ and
$b_\infty = p'(\rho_\infty)$ are positive constants.
Here the space dimension is $n=3$ and the size of the systems are $m=10$
and $m_1 = 2$.
For this linearized Euler-Maxwell system, we obtain the following result.

\begin{thm}
[Decay property for the Euler Maxwell system]\label{thmEM}
The linearized Euler-Maxwell system satisfies all the conditions in Theorem
\ref{thm1'} and therefore the corresponding solution verifies the
pointwise estimate \eqref{loss-point} and the decay estimate
\eqref{loss-decay}.
\end{thm}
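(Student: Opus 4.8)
The plan is to verify each of the hypotheses of Theorem \ref{thm1'} in turn, namely the conditions (A), (C), (S), (S*)$_1$ and (K*), for the matrices $A^0$, $A(\xi)$, $L$, $Q(\xi)$ and $R$ displayed in \eqref{mat1}--\eqref{mat2}. Condition (A) is essentially by inspection: $A^0$ is block-diagonal with positive scalar/identity blocks so $A^0>0$, each $A^j$ is real symmetric because $\Omega_\xi^T=-\Omega_\xi$ makes the off-diagonal $E$--$B$ block symmetric, and $L\geq 0$ follows from the fact that its only nontrivial block is $\rho_\infty(I-\Omega_{B_\infty})$ acting on the $v$-component (the skew part $-\rho_\infty\Omega_{B_\infty}$ contributes nothing to the real part of the quadratic form) together with the coupling block $\pm\rho_\infty I$ between $v$ and $E$, which is itself skew in the $(v,E)$ variables and hence drops out of $\langle L\phi,\phi\rangle={\rm Re}\,\langle L\phi,\phi\rangle=\rho_\infty|\phi_v|^2$. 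One then reads off ${\rm Ker}(L)$ and ${\rm Ker}(L_1)$ explicitly: $L_1$ has the single nontrivial block $\rho_\infty I$ on the $v$-slot, so ${\rm Ker}(L_1)=\{\phi_v=0\}$, while ${\rm Ker}(L)$ additionally requires the $E$-component to vanish (from the $-\rho_\infty\phi_E$ entry in the $v$-row and the $-\rho_\infty\phi_v$ entry in the $E$-row), giving ${\rm Ker}(L)=\{\phi_v=0,\ \phi_E=0\}$.

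Next I would check condition (C). This amounts to the three identities in \eqref{assump-QR}, and each is a short block computation using the structure of $Q(\xi)=[\,0\ \ 0\ \ \xi\ \ 0;\ 0\ \ 0\ \ 0\ \ \xi\,]$ and $R=[\,1\ 0\ 0\ 0;\ 0\ 0\ 0\ 0\,]$: the product $Q(\xi)(A^0)^{-1}A(\xi)$ picks out the $E$- and $B$-rows of $A(\xi)$, which involve only $\mp\Omega_\xi$ acting on $B$ and $E$, and $\Omega_\xi\xi^T=0$ kills the relevant entries; $R(A^0)^{-1}L=0$ because the first row of $L$ is zero; and the cross term $Q(\xi)(A^0)^{-1}L+R(A^0)^{-1}A(\xi)=0$ reduces to the Maxwell-type cancellation $\rho_\infty\Omega_\xi + (\text{density-momentum coupling})$, which vanishes after accounting for $a_\infty$, $b_\infty$ and $\rho_\infty$. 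These are exactly the algebraic relations expressing that $\mathrm{div}\,E=\rho_\infty-\rho$ and $\mathrm{div}\,B=0$ are propagated by the evolution.

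For condition (S) I would choose $S$ so that $SA^0$ is symmetric and $(SL)_1+L_1$ is positive on ${\rm Ker}(L)^\perp$ with kernel exactly ${\rm Ker}(L)$; the natural candidate, mirroring the Timoshenko case and the Euler-Maxwell analyses in \cite{USK,UK}, is a multiple of the transposition/reflection matrix swapping $E$ and $B$ (suitably weighted), so that $SL$ produces a nontrivial symmetric block on the $B$-slot, compensating for the missing dissipation there; one then verifies $(SL)_1+L_1\geq 0$ and the kernel equality by completing the square exactly as in the Timoshenko computation, choosing the free coupling constant small. Condition (K*) requires producing a skew-symmetric-after-$A^0$ matrix $K(\omega)$ with $(K(\omega)A(\omega))_1>0$ on $X_\omega\cap{\rm Ker}(L)$; since on ${\rm Ker}(L)$ the vector has $\phi_v=\phi_E=0$, and the constraint $X_\omega$ (from $\Pi_2Q(\omega)\phi=0$) pins down the longitudinal part of $B$ in terms of nothing, one only needs positivity of a compensator in the $(\rho,v)$ and transverse-$E$--$B$ directions — this is furnished by the standard Euler-type compensating matrix coupling $\rho$ with $v$ and the Maxwell curl structure; alternatively, one could instead verify the Kalman-type rank condition and invoke a (K*)-analogue of Theorem \ref{thmK}. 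Finally, condition (S*)$_1$ asks that $i(SA(\omega)-T(\omega))_2\geq 0$ on ${\rm Ker}(L_1)$, where $T(\omega)=(\Pi_1Q(\omega))^T\tilde S R$ with $\tilde S_1\geq 0$ on ${\rm Image}(R)$; on ${\rm Ker}(L_1)=\{\phi_v=0\}$ one computes the skew part of $SA(\omega)$ restricted there and checks it is cancelled (or made nonnegative) by a suitable choice of the small matrix $\tilde S$ acting only on the first component of $\C^{m_1}$, using the constraint relation $i|\xi|\Pi_1Q(\omega)\hat u=-R\hat u$.

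\medskip

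I expect the main obstacle to be the verification of (S*)$_1$ together with the correct matching of the constraint matrix $T(\omega)$: unlike the Timoshenko case, the skew part $i(SA(\omega))_2$ does not vanish on ${\rm Ker}(L_1)$ by itself (it involves the coupling between the longitudinal $v$-direction and the $\rho$-direction, as well as residual $E$--$B$ curl terms), and one must find $\tilde S$ so that the combination $i(SA(\omega)-T(\omega))_2$ becomes nonnegative precisely on ${\rm Ker}(L_1)$. Getting the signs and the scaling of the several small parameters ($\beta$ in $S$, the coupling in $\tilde S$, and $\mu$ in $K(\omega)$) mutually consistent — so that all of (S), (S*)$_1$ and (K*) hold simultaneously — is the delicate bookkeeping step; everything else is routine block linear algebra.
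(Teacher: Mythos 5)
Your overall skeleton is the same as the paper's (verify (A), (C), (S), (S*)$_1$, (K*) for the matrices \eqref{mat1}--\eqref{mat2} and invoke Theorem \ref{thm1'}), and your treatment of (A), (C) and the rough picture for (K*) is sound. But the concrete step on which everything else hinges --- the choice of $S$ --- is wrong as proposed, and the reasoning behind it misreads what condition (S) asks for. You have ${\rm Ker}(L)=\{\phi_v=\phi_E=0\}$ and ${\rm Ker}(L_1)=\{\phi_v=0\}$, so \eqref{assump-2} forces $(SL)_1+L_1$ to be nonnegative with kernel \emph{equal} to ${\rm Ker}(L)$: the matrix $S$ must create dissipation precisely on the $E$-component (the gap between ${\rm Ker}(L_1)$ and ${\rm Ker}(L)$) and must create none on the $B$-component, which remains in the kernel; the $\rho$- and $B$-directions are handled later by $K(\omega)$ under (K*) together with the constraint $X_\omega$. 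Your candidate --- a weighted swap of $E$ and $B$ ``so that $SL$ produces a nontrivial symmetric block on the $B$-slot'' --- does the opposite: with that $S$ one gets $SL=\beta\,(\text{block with }-\rho_\infty I\text{ in the }(B,v)\text{ position})$, hence
\begin{equation*}
\langle((SL)_1+L_1)\phi,\phi\rangle=\rho_\infty|\phi_2|^2-\beta\rho_\infty\,{\rm Re}\,\langle\phi_2,\phi_4\rangle,
\end{equation*}
which is not nonnegative for any $\beta>0$ (take $\phi_2$ small and aligned with $\phi_4$) and yields no positivity in $\phi_3$ at all, so both requirements of (S) fail. The working choice, as in \cite{USK,UK} and in the paper, couples $v$ and $E$: $S$ has blocks $\beta I$ in the $(v,E)$ slot and $(\beta/\rho_\infty)I$ in the $(E,v)$ slot, after which $(SL)_1+L_1$ is positive definite in $(\phi_2,\phi_3)$ for small $\beta$ by the same discriminant argument you cite from the Timoshenko case.

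This error propagates into your (S*)$_1$ discussion. With the correct $S$, the skew part $(SA(\omega))_2$ couples $\rho$ with $E$ (through $a_\infty\omega$) and $v$ with $B$ (through $\Omega_\omega$); on ${\rm Ker}(L_1)$ (where $\phi_2=0$) only the $\rho$--$E$ term survives, and it is cancelled \emph{exactly} by $T(\omega)=(\Pi_1 Q(\omega))^T\tilde S R$ with $\tilde S$ a positive multiple of the identity, so that $i(SA(\omega)-T(\omega))_2$ vanishes (not merely is nonnegative) on ${\rm Ker}(L_1)$ --- there is no ``longitudinal $v$--$\rho$ coupling'' or residual $E$--$B$ curl obstruction to balance, and no delicate interplay of small parameters beyond choosing $\beta$ small for (S). Finally, note that your fallback of replacing the explicit $K(\omega)$ by a rank condition is not available here as stated: Theorem \ref{thmK} relates (R) to (K), not to (K*), so for the constrained system you do need the explicit compensator $K(\omega)$ (coupling $\rho$ with $v$ and $E$ with $B$) and the constraint $\omega\cdot\phi_4=0$ from $X_\omega$ to get \eqref{assump-4'*}.
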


\begin{proof}
The symmetric part of $L$ is given by
\begin{equation*}
L_1 =
\left(
\begin{array}{cccc}
 {0} & {0} & {0} & {0} \\
 {0} & {\rho_\infty I} & {0} & {0} \\
 {0} & {0} & {0} & {0} \\
 {0} & {0} & {0} & {0} \\
\end{array}
\right)
\end{equation*}
and we see that
\begin{equation*}
{\rm ker}(L)={\rm span}\{e_1,e_8,e_9,e_{10}\}, \qquad
{\rm ker}(L_1)={\rm span}\{e_1,e_5,e_6,e_7,e_8,e_9,e_{10}\},
\end{equation*}
where $e_1=(1,0,0,0,0,0,0,0,0,0)^T$, 
$\cdots$, $e_{10}=(0,0,0,0,0,0,0,0,0,1)^T$ form the standard orthonormal
basis of $\C^{10}$.
The image of the matrix $R$ in \eqref{mat2} is spanned by $(1,0)^T\in\C^2$.
Therefore the corresponding orthogonal projections $\Pi_1$ and $\Pi_2$
are given respectively by
\begin{equation*}
\Pi_1 =
\left(
\begin{array}{cccc}
 {1} & {0} \\
 {0} & {0} \\
\end{array}
\right), \qquad
\Pi_2 =
\left(
\begin{array}{cccc}
 {0} & {0} \\
 {0} & {1} \\
\end{array}
\right).
\end{equation*}
Since
\begin{equation*}
\Pi_2 Q(\omega) =
\left(
\begin{array}{cccc}
 {0} & {0} & {0} & {0} \\
 {0} & {0} & {0} & {\omega} \\
\end{array}
\right),
\end{equation*}
the subspace $X_\omega$ defined in \eqref{X} consists of vectors
$\phi=(\phi_1,\phi_2,\phi_3,\phi_4)\in\C^{10}$ such that 
$\phi_1\in\C$, $\phi_2,\,\phi_3,\,\phi_4\in\C^3$ and 
$\omega\cdot\phi_4=0$.

It is easy to check that the matrices in \eqref{mat1} satisfy the condition
(A). For instance, we have
$\langle L\phi,\phi\rangle=\rho_\infty|\phi_2|^2\geq 0$
for $\phi=(\phi_1,\phi_2,\phi_3,\phi_4)\in\C^{10}$, where $\phi_1\in\C$
and $\phi_2,\,\phi_3,\,\phi_4\in\C^3$. Thus we see that $L\geq 0$ on
$\C^{10}$.
Also we can check \eqref{assump-QR} in the condition (C) by direct
computations using the expressions
\begin{equation*}
(A^0)^{-1}A(\omega) =
\left(
\begin{array}{cccc}
 {0} & {\rho_\infty \omega} & {0} & {0} \\
 {a_\infty \omega^T} & {0} & {0} & {0} \\
 {0} & {0} & {0} & {-\Omega_\omega} \\
 {0} & {0} & {\Omega_\omega} & {0} \\
\end{array}
\right), \quad
(A^0)^{-1}L =
\left(
\begin{array}{cccc}
 {0} & {0} & {0} & {0} \\
 {0} & {I-\Omega_{B_\infty}} & {I} & {0} \\
 {0} & {- \rho_\infty I} & {0} & {0} \\
 {0} & {0} & {0} & {0} \\
\end{array}
\right).
\end{equation*}

%

We show that our Euler-Maxwell system satisfies the conditions (K*), (S)
and (S*)$_1$. We define the real matrices $S$ and $K(\omega)$ by
\begin{equation*}
S = \beta
\left(
\begin{array}{cccc}
 {0} & {0} & {0} & {0} \\
 {0} & {0} & {I} & {0} \\
 {0} & {(1/\rho_\infty)I} & {0} & {0} \\
 {0} & {0} & {0} & {0} \\
\end{array}
\right), \qquad
K(\omega) = 
\left(
\begin{array}{cccc}
 {0} & {(1/\rho_\infty)\omega} & {0} & {0} \\
 {-(1/a_\infty)\omega^T} & {0} & {0} & {0} \\
 {0} & {0} & {0} & {\Omega_\omega} \\
 {0} & {0} & {\Omega_\omega} & {0} \\
\end{array}
\right),
\end{equation*}
where $\beta$ 
is a positive constant determined later.
This choice of $S$ and $K(\omega)$ is based on the computations employed
in our previous papers \cite{D,USK,UK}.
Then straightforward computations yield
\begin{equation*}
	\begin{split}
&SA^0 = \beta
\left(
\begin{array}{cccc}
 {0} & {0} & {0} & {0} \\
 {0} & {0} & {I} & {0} \\
 {0} & {I} & {0} & {0} \\
 {0} & {0} & {0} & {0} \\
\end{array}
\right),  \ \
SA(\omega) = \beta
\left(
\begin{array}{cccc}
 {0} & {0} & {0} & {0} \\
 {0} & {0} & {0} & {-\Omega_\omega} \\
 {a_\infty \omega^{T}} & {0} & {0} & {0} \\
 {0} & {0} & {0} & {0} \\
\end{array}
\right), \ \ 
SL = \beta
\left(
\begin{array}{cccc}
 {0} & {0} & {0} & {0} \\
 {0} & {-\rho_\infty I} & {0} & {0} \\
 {0} & {I - \Omega_{B_\infty}} & {I} & {0} \\
 {0} & {0} & {0} & {0} \\
\end{array}
\right),  \\[3mm]
&
K(\omega)A^0 = 
\left(
\begin{array}{cccc}
 {0} & {\omega} & {0} & {0} \\
 {-\omega^T} & {0} & {0} & {0} \\
 {0} & {0} & {0} & {\Omega_\omega} \\
 {0} & {0} & {\Omega_\omega} & {0} \\
\end{array}
\right), \quad
K(\omega)A(\omega) = 
\left(
\begin{array}{cccc}
 {a_\infty} & {0} & {0} & {0} \\
 {0} & {-\rho_\infty(\omega \otimes \omega)} & {0} & {0} \\
 {0} & {0} & {\Omega_\omega^2} & {0} \\
 {0} & {0} & {0} & {-\Omega_\omega^2} \\
\end{array}
\right).
\end{split}
\end{equation*}
Hence we see that
\begin{equation*}
\begin{split}
(SA(\omega))_2 &= \frac{1}{2}\beta
\left(
\begin{array}{cccc}
 {0} & {0} & {-a_\infty \omega} & {0} \\
 {0} & {0} & {0} & {-\Omega_\omega} \\
 {a_\infty \omega^{T}} & {0} & {0} & {0} \\
 {0} & {-\Omega_\omega} & {0} & {0} \\
\end{array}
\right), \quad 
(SL)_1 = \beta
\left(
\begin{array}{cccc}
 {0} & {0} & {0} & {0} \\
 {0} & {-\rho_\infty I} & {\frac{1}{2}(I + \Omega_{B_\infty})} & {0} \\
 {0} & {\frac{1}{2}(I - \Omega_{B_\infty})} & {I} & {0} \\
 {0} & {0} & {0} & {0} \\
\end{array}
\right),
\end{split}
\end{equation*}
and $(K(\omega)A(\omega))_1 = K(\omega)A(\omega)$.

First we check the condition (K*).
Obviously we see that $K(-\omega)=-K(\omega)$ and
$(K(\omega)A(\omega))^T=-K(\omega)A(\omega)$.
Also a simple computation gives
\begin{equation*}
\langle ((K(\omega)A(\omega))_1\phi,\phi \rangle
= a_\infty |\phi_1|^2 - \rho_\infty|\omega \cdot \phi_2|^2
- |\Omega_\omega \phi_3|^2 + |\Omega_\omega \phi_4|^2
\end{equation*}
for $\phi=(\phi_1,\phi_2,\phi_3,\phi_4)\in\C^{10}$, where $\phi_1\in\C$
and $\phi_2,\,\phi_3,\,\phi_4\in\C^3$.
Now we suppose that $\phi\in X_\omega\cap{\rm Ker}(L)$. Then
$\phi=(\phi_1,0,0,\phi_4)$ with $\omega\cdot\phi_4=0$.
For this $\phi$, we have $|\Omega_\omega \phi_4|^2=|\phi_4|^2$ and hence
\begin{equation*}
\langle ((K(\omega)A(\omega))_1\phi,\phi \rangle
= a_\infty |\phi_1|^2 + |\phi_4|^2.
\end{equation*}
This shows \eqref{assump-4'*}. Therefore we have checked the condition
(K*).

Next we check the condition (S). We have
\begin{equation*}
(SL)_1+L_1 =
\left(
\begin{array}{cccc}
 {0} & {0} & {0} & {0} \\
 {0} & {(1 -\beta) \rho_\infty I} & {\frac{1}{2}\beta(I + \Omega_{B_\infty})} & {0} \\
 {0} & {\frac{1}{2}\beta(I - \Omega_{B_\infty})} & {\beta I} & {0} \\
 {0} & {0} & {0} & {0} \\
\end{array}
\right). 
\end{equation*}
Then a simple computation gives
\begin{equation*}
\begin{split}
\langle ((SL)_1+L_1)\phi,\phi \rangle
&= (1-\beta) \rho_\infty |\phi_2|^2 + \beta |\phi_3|^2
   + \beta {\rm Re}\{(I+\Omega_{B_\infty})\phi_3 \cdot \bar \phi_2\} \\[1mm]
&\geq (1-\beta) \rho_\infty |\phi_2|^2 + \beta |\phi_3|^2
   - \beta(1+|B_\infty|)|\phi_2||\phi_3|
\end{split}
\end{equation*}
for $\phi = (\phi_1,\phi_2,\phi_3,\phi_4)^T\in{\mathbb C}^{10}$.
The corresponding discriminant is
$\beta^2(1+|B_\infty|)^2 - 4\beta(1-\beta)\rho_\infty
= \beta\{(4\rho_\infty + (1+|B_\infty|)^2)\beta - 4\rho_\infty\}$.
Therefore, letting $\beta>0$ so small that
$\beta < 4\rho_\infty/(4\rho_\infty+(1+|B_\infty|)^2)$, we get
\begin{equation*}
\langle ((SL)_1+L_1) \phi,\phi \rangle
\ge c (|\phi_2|^2 + |\phi_3|^2),
\end{equation*}
where $c$ is a positive constant. This shows that
$(SL)_1+L_1\geq 0$ on $\C^{10}$ and
${\rm Ker}((SL)_1+L_1)={\rm span}\{e_1,e_8,e_9,e_{10}\}$.
Hence we have ${\rm Ker}((SL)_1+L_1)={\rm Ker}(L)$.
Thus we have verified the condition (S).

Finally, we check the condition (S*)$_1$.
We need to determine the matrix $T(\omega)=(\Pi_1Q(\omega))^T\tilde SR$
in \eqref{assump-3'}. We take $\tilde S = \beta_1 a_\infty I$. Then
the corresponding $T(\omega)$ is given by
$$
T(\omega) 
= \beta
\left(
\begin{array}{cccc}
 {0} & {0} & {0} & {0} \\
 {0} & {0} & {0} & {0} \\
 { a_\infty  \omega^T} & {0} & {0} & {0} \\
 {0} & {0} & {0} & {0} \\
\end{array}
\right).
$$
For this $T(\omega)$, we see that
$$
(SA(\omega)-T(\omega))_2
= -\frac{1}{2}\beta
\left(
\begin{array}{cccc}
 {0} & {0} & {0} & {0} \\
 {0} & {0} & {0} & {\Omega_\omega} \\
 {0} & {0} & {0} & {0} \\
 {0} & {\Omega_\omega} & {0} & {0} \\
\end{array}
\right).
$$
Therefore we obtain
$$
\langle i(SA(\omega)-T(\omega))_2 \phi, \phi \rangle
= \beta {\rm Im}(\Omega_\omega \phi_4 \cdot \bar \phi_2)
$$
for $\phi = (\phi_1,\phi_2,\phi_3,\phi_4)^T \in \mathbb{C}^{10}$.
Now let $\phi \in {\rm Ker}(L_1)$. Then $\phi = (\phi_1,0,\phi_3,\phi_4)^T$.
For this $\phi$, we have
$\langle i(SA(\omega)-T(\omega))_2 \phi, \phi \rangle = 0$, which shows
\eqref{assump-3'}.
Thus we have verified the condition (S*)$_1$.
Consequently, Theorem \ref{thm1'} is applicable to the linearized
Euler-Maxwell system and we obtain the pointwise estimate \eqref{loss-point}
and the decay estimate \eqref{loss-decay}.
This completes the proof of Theorem \ref{thmEM}.
\end{proof}

\bigskip

\noindent {\sc Acknowledgments:}\
The first author is partially supported by
Grant-in-Aid for Young Scientists (B) No.\,21740111
from Japan Society for the Promotion of Science.
The second author's research is partially supported by the Direct Grant 2010/2011 in CUHK.
The third author is partially supported by
Grant-in-Aid for Scientific Research (A) No.\,22244009.
A part of this paper was completed when Y. Ueda visited
the Institute of Mathematical Sciences, the Chinese University of Hong Kong in February, 2011.
Y. Ueda expresses sincere gratitudes to Professor Zhouping Xin for his kind invitation
and hospitality.


\end{document}